\documentclass[12pt,twoside]{amsart}
\setlength{\textheight}{8in}
\setlength{\textwidth}{6in}
\setlength{\topmargin}{.7in}
\setlength{\oddsidemargin}{0.2in}
\setlength{\evensidemargin}{0.2in}

\usepackage{amsfonts,amsmath}
\usepackage[all,knot,arc]{xy}
\usepackage[dvips]{graphicx}
\usepackage{psfrag}



\def\endpf{\relax\ifmmode\expandafter\endproofmath\else
  \unskip\nobreak\hfil\penalty50\hskip.75em\hbox{}\nobreak\hfil\bull
  {\parfillskip=0pt \finalhyphendemerits=0 \bigbreak}\fi}
\def\bull{\vbox{\hrule\hbox{\vrule\kern3pt\vbox{\kern6pt}\kern3pt\vrule}\hrule}}

\newtheorem{defn}{Definition}[section]
\newtheorem{lemma}[defn]{Lemma}

\newtheorem{theorem}[defn]{Theorem}

\newtheorem{remark}[defn]{Remark}
\newtheorem{proposition}[defn]{Proposition}

\newtheorem{maintheorem}{Theorem}
\newtheorem{maincor}[maintheorem]{Corollary}
\newtheorem{example}[defn]{Example}
\newtheorem{maindefn}{Definition}

\newcommand{\zz}{{\mathbb Z}}

\newcommand{\qq}{{\mathbb Q}}

\newcommand{\ozsvath}{Ozsv\'{a}th}
\newcommand{\szabo}{Szab\'{o}}

\newcommand{\spin}{\ifmmode{\rm Spin}\else{${\rm spin}$\ }\fi}
\newcommand{\spinc}{\ifmmode{{\rm Spin}^c}\else{${\rm spin}^c$\ }\fi}

\newcommand{\spincs}{\mathfrak s}

\newcommand{\lk}{{\rm lk}}

\newcommand{\calc}{\mathcal{C}}
\newcommand{\lc}{\mathcal{L}}
\newcommand{\tlc}{\widetilde{\mathcal{L}}}
\newcommand{\tl}{\tilde{l}}
\newcommand{\Top}{\mathrm{TOP}}
\newcommand{\lctop}{\lc_\Top}
\newcommand{\tlctop}{\tlc_\Top}
\newcommand{\mobius}{M\"{o}bius}
\newcommand{\calctop}{\calc_\Top}
\newcommand{\calf}{\mathcal{F}}
\newcommand{\tcalf}{\widetilde{\mathcal{F}}}

\newcommand{\calr}{\mathcal{R}}

\newcommand{\calh}{\mathcal{H}}

\newcommand{\caln}{\mathcal{N}}
\newcommand{\tcaln}{\widetilde{\mathcal{N}}}

\newcommand{\ds}{\displaystyle}

\newenvironment{narrow}[2]{%
 \begin{list}{}{%
  \setlength{\topsep}{0pt}%
  \setlength{\leftmargin}{#1}%
  \setlength{\rightmargin}{#2}%
  \setlength{\listparindent}{\parindent}%
  \setlength{\itemindent}{\parindent}%
  \setlength{\parsep}{\parskip}%
 }%
\item[]}{\end{list}}

\newif\ifpic
\picfalse    
\pictrue   


\DeclareMathOperator\fr{Fr}

\begin{document}

\title{Concordance groups of links}
\author{Andrew Donald and Brendan Owens}
\date{\today}
\thanks{}

\begin{abstract}
We define a notion of concordance based on Euler characteristic, and show that it gives rise to a concordance group $\lc$ of links in $S^3$, which has the concordance group of knots as a direct summand with infinitely generated complement.  We consider variants of this using oriented and nonoriented surfaces as well as smooth and locally flat embeddings.
\end{abstract}

\maketitle

\pagestyle{myheadings} \markboth{ANDREW DONALD AND BRENDAN OWENS}
{CONCORDANCE GROUPS OF LINKS}


\section{Introduction}
\label{sec:intro}
A knot $K$ in $S^3$ is \emph{slice} if it bounds a smoothly embedded disk  $\Delta$ in the four-ball; it is topologically slice if it bounds a locally flat embedded disk.
Two oriented knots $K_0$, $K_1$ are \emph{concordant} if the connected sum $-K_0\#K_1$
of one with the reverse mirror of the other is slice.  This is an equivalence relation, and Fox and Milnor showed that the set of equivalence classes forms a group 
$\calc$ under connected sum \cite{fm1,fm2}.  Our goal in this paper is to generalise this construction in a natural way to links.  The starting point is 
Lisca's work  \cite{lisca1,lisca2} on two-bridge links and lens spaces, as well as earlier work of Florens \cite{florens1,florens2}.  These indicate that the following is a natural  generalisation of sliceness to links. 

\begin{figure}[htbp]
\centering
\ifpic
\includegraphics[width=14cm]{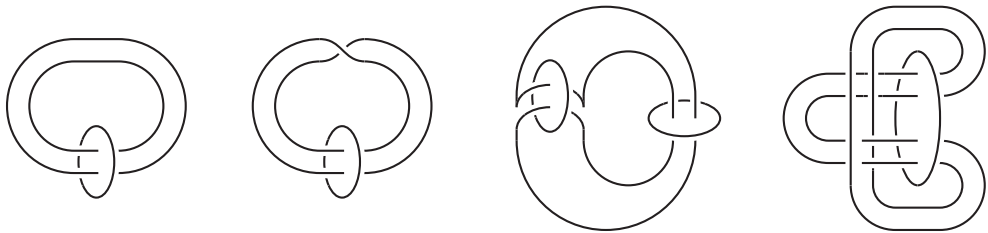}
\else \vskip 5cm \fi
\begin{narrow}{0.3in}{0.3in}
\caption{
\bf{Some links bounding $\chi=1$ surfaces in the four-ball: connected sum of two Hopf links, $(2,4)$-torus link, Borromean rings and connected sum of Hopf and Whitehead links.
}}
\label{fig:ribbonF}
\end{narrow}
\end{figure}

\begin{maindefn}
\label{def:chislice1}
A link $L$ in $S^3$ is $\chi$-slice\footnote{This is called \emph{geometrically bordant} in \cite{florens2}, in the case that the surface $F$ is orientable.}
 if $L$ bounds a smoothly properly embedded surface
$F$ in $D^4$ without closed components, and with $\chi(F)=1$.  If $L$ is oriented we require $F$ to be compatibly oriented.
\end{maindefn}

Some examples of $\chi$-slice links are shown in Figure \ref{fig:ribbonF}.  Note we do not in general require that $F$ is connected or oriented.  Observe however that if $L$ is a knot then $F$ is a disk, so this notion of sliceness coincides with the usual one.

The set of oriented knots is an abelian monoid under connected sum, together with an involution $K\mapsto -K$.  We wish to endow the set of links with a compatible monoid with involution structure.  We use the term \emph{partly oriented link} to denote a link with a marked oriented component and the remaining components unoriented, and the term \emph{marked oriented link} to denote an oriented link with a marked component.  Connected sum is well-defined for these sets of links using the marked components.  We define $-L$ to be the mirror of $L$, with orientations reversed.

\begin{figure}[htbp]
{\centering
\ifpic
\psfrag{1}{\scriptsize$H=$}
\psfrag{2}{\scriptsize$L_1=$}
\psfrag{3}{\scriptsize$\tilde{H}=$}
\psfrag{4}{\scriptsize$\tilde{L}_1=$}
\includegraphics[width=12cm]{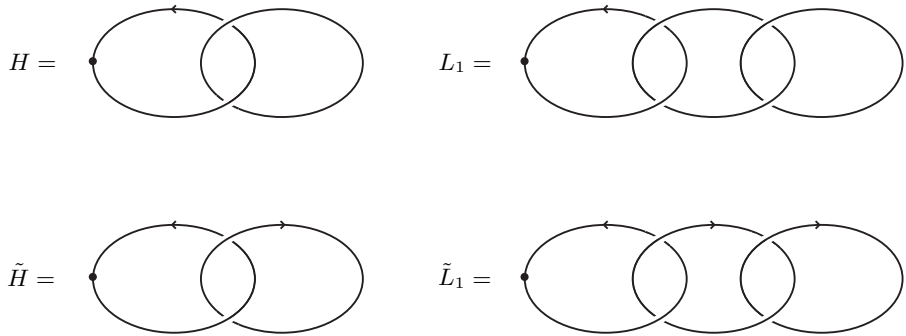}
\else \vskip 5cm \fi
\begin{narrow}{0.3in}{0.3in}
\caption{
\bf{Two partly oriented links $H$ and $L_1$ and two marked oriented links $\tilde{H}$ and $\tilde{L}_1$.}}
\label{fig:HL1}
\end{narrow}
}
\end{figure}

This gives the following commutative diagram of monoids with involution, where the vertical arrow is the map given by forgetting the orientation on nonmarked components.

\begin{equation}
\label{knotlink}
\leavevmode
\begin{xy} 
(0,10)*+{\text{\{Oriented knots\}}}="k";
(60,0)*+{\text{\{Partly oriented links\}}}="p";
(60,20)*+{\text{\{Marked oriented links\}}}="m";
{\ar@{^{(}->} "k"; "p"};
{\ar@{^{(}->} "k"; "m"};
{\ar@{->>} "m"; "p"};
\end{xy}
\end{equation}

We wish to quotient each of these link monoids by a suitable submonoid such that the maps in \eqref{knotlink} induce inclusions of the knot concordance group $\calc$ into two different concordance groups of links.  Roughly speaking we would like to quotient out by $\chi$-slice links, but it turns out we must be a little more careful in order to get an equivalence relation, and to preserve connected sums.

\begin{maindefn}
\label{def:chiconc}
Let $L_0$ and $L_1$ be partly oriented or marked oriented links.  We say $L_0$ and $L_1$ are $\chi$-concordant, written $L_0\sim L_1$, if $-L_0\#L_1$ bounds 
a smoothly properly embedded surface $F$ in $D^4$ such that
\begin{itemize}
\item $F$ is a disjoint union of one disk together with annuli and M\"{o}bius bands;
\item the boundary of the disk component of $F$ is the marked component of $-\!L_0\,\#L_1$;
\item in the marked oriented case, we require $F$ to be oriented and $-\!L_0\,\#L_1$ to be the oriented boundary of $F$.
\end{itemize}
\end{maindefn}

Note that $\chi$-concordance agrees with the usual definition of smooth concordance if $L_0$ and $L_1$ are both knots.  Also $L_0\sim L_1$ implies that
$-L_0\#L_1$ is $\chi$-slice, but the converse does not hold.  We will elaborate on this point in Section \ref{sec:alg}.  We have the following basic results.

\begin{maintheorem}
\label{thm:mainthm}
The set of $\chi$-concordance classes of partly oriented links forms an abelian group
$$\lc\cong\calc\oplus\lc_0$$
under connected sum which contains the smooth knot concordance group $\calc$ as a direct summand.
The inclusion $\calc\hookrightarrow\lc$ is induced by the inclusion of oriented knots into partly oriented links.

The complement $\lc_0$ of $\calc$ in $\lc$ contains a $\zz/2$ direct summand and a $\zz^\infty\oplus(\zz/2)^\infty$ subgroup.
\end{maintheorem}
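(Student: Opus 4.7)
\medskip

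\noindent\textbf{Proof proposal.}
The plan breaks into three stages: verify that $\sim$ is an equivalence relation compatible with $\#$ (so that $\lc$ is a group); construct a retraction $\lc\twoheadrightarrow\calc$ onto the knot concordance subgroup; and exhibit explicit families of partly oriented links together with $\chi$-concordance invariants that detect the claimed subgroups of $\lc_0$.

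For the group structure, reflexivity follows from the standard product construction: $-L\#L$ bounds the surface obtained by pushing a copy of $L\setminus\{\text{marked pt}\}\times[0,1]$ into $D^4$, which is a disk (from doubling the marked component) together with annuli (from the unmarked ones). Symmetry is immediate from mirroring $D^4$. For transitivity, if $F_{01}$ realises $L_0\sim L_1$ and $F_{12}$ realises $L_1\sim L_2$, then gluing along $L_1$ inside an equatorial $S^3\subset S^4$ and removing a small ball yields a surface in $D^4$ with boundary $-L_0\#L_2$; an Euler-characteristic count shows the two disk components concatenate into a single disk on the marked component, while the remaining pieces are annuli and M\"{o}bius bands with no closed components. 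Well-definedness of connected sum on $\sim$-classes and the identity $[L]+[-L]=0$ then follow from reflexivity applied to $-L\#L$, making $\lc$ an abelian group.

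For the splitting, define $\pi\colon \lc\to\calc$ by sending $[L]$ to the concordance class of its marked oriented component $K$. If $L_0\sim L_1$ via a surface $F$, the disk component of $F$ is a slice disk for $-K_0\#K_1$, so $\pi$ is well-defined; since $\pi$ composed with the inclusion $\calc\hookrightarrow\lc$ from \eqref{knotlink} is the identity, we get $\lc\cong\calc\oplus\lc_0$ with $\lc_0=\ker\pi$.

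For the subgroup claims in $\lc_0$, I would take the Hopf link $H$ (with marked component, other component unoriented) as a candidate $\zz/2$-generator: Figure \ref{fig:ribbonF} shows $H\#H$ is $\chi$-slice, and to see $[H]\neq 0$ one uses a $\zz/2$-valued invariant of $\chi$-concordance such as the mod $2$ linking number (or equivalently the parity of $\det L$). For the $\zz^\infty\oplus(\zz/2)^\infty$ subgroup, I would use infinite families of two-bridge links and torus links coming from Lisca's classification: a link $L$ which is $\chi$-slice has double cover $\Sigma_2(L)$ bounding a rational homology ball, so obstructions to this (Donaldson's diagonalisation theorem applied to the sharp plumbed four-manifold filling $\Sigma_2(L)$, as used in \cite{lisca1,lisca2}) yield $\chi$-concordance obstructions. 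Tristram--Levine signatures give additive $\zz$-valued invariants detecting a $\zz^\infty$ subgroup among suitable $(2,2n)$-torus links or similar families, while a carefully chosen family of amphichiral two-bridge links whose double branched covers are obstructed from bounding rational homology balls yields the $(\zz/2)^\infty$ summand.

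The main obstacle is the last step: one must check that the chosen invariants descend to $\chi$-concordance (not just to $\chi$-sliceness of the difference) and that independence holds in $\lc_0$ rather than in the full monoid. The key technical point is that an annulus or M\"{o}bius band component of the $\chi$-concordance surface can change the link type of the unmarked components in a controlled way, and one must verify that the chosen invariants (signatures and Donaldson-type obstructions from the double cover) are insensitive to this. I expect this to follow by gluing: the double cover of $D^4$ branched over the concordance surface is a rational homology $S^1\times D^3$ cobordism between the double covers of the two links, which is enough to transport the obstructions.
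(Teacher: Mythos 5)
Your overall architecture matches the paper's: an equivalence relation made into a group under connected sum, a splitting $\lc\cong\calc\oplus\lc_0$ via the marked component, the mod~$2$ linking number with the marked component detecting the Hopf link as a $\zz/2$ summand, and double branched covers of $D^4$ carrying the obstructions for the infinite families. However, there are two genuine gaps in your final stage. First, Levine--Tristram signatures cannot be used to detect a $\zz^\infty$ subgroup of $\lc_0$: the unmarked components of a partly oriented link carry no orientation, so $\sigma_\omega$ is not even a well-defined function on $\lc$, and more seriously the $\chi$-concordance surfaces in this group are allowed to contain M\"{o}bius bands, so the Murasugi--Tristram inequality (which requires an oriented spanning surface) gives no control. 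This is precisely why the paper reserves signature arguments for the oriented group $\tlc$ and, for $\lc_0$, routes everything through the homomorphism $\calf:\caln\to\Theta^3_\qq$ given by the double branched cover (well defined only on links of nonzero determinant, by Proposition \ref{prop:2covers}), then quotes Lisca's theorem that the lens spaces $L(2k,1)$, $k>2$, are independent in $\Theta^3_\qq$. Your alternative suggestion (Donaldson-type obstructions on the double cover) is the right mechanism, but you still need an input establishing independence of an \emph{infinite} family in $\Theta^3_\qq$, which is exactly \cite[Theorem 1.1]{lisca2}.

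Second, for the $(\zz/2)^\infty$ subgroup it is not enough to exhibit infinitely many amphichiral links each of order exactly two; you must show no finite subfamily generates the rest. The paper's argument is arithmetic: Proposition \ref{prop:2covers} forces the determinant of a $\chi$-nullconcordant link to be a square, and for the family $S(q^2+1,q)$, $q$ odd, given any finite subfamily one chooses a prime $p\equiv1\pmod4$ dividing none of the $q_i^2+1$ and finds $q$ with $q^2+1$ divisible by $p$ but not $p^2$, so $S(q^2+1,q)$ lies outside the subgroup generated. This independence step is absent from your sketch. Two smaller corrections: when you compose concordances in the transitivity argument, closed components \emph{can} appear and must simply be discarded (your claim that none arise is not justified); and the branched double cover of the composed cobordism is a rational homology $S^3\times[0,1]$, not a rational homology $S^1\times D^3$, and only under the nonzero-determinant hypothesis.
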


\begin{maintheorem}
\label{thm:or}
The set of $\chi$-concordance classes of marked oriented links forms an abelian group
$$\tlc\cong\calc\oplus\tlc_0$$
under connected sum which contains the smooth knot concordance group $\calc$ as a direct summand
(with  $\calc\hookrightarrow\tlc$ induced by the inclusion of oriented knots into marked oriented links).
Forgetting orientations on nonmarked components induces a surjection $\tlc\to\lc$.  In other words, we have the following group
homomorphisms induced by \eqref{knotlink}:

\begin{center}
\leavevmode
\begin{xy} 
(0,10)*+{\calc\ }="k";
(20,0)*+{\lc}="p";
(20,20)*+{\tlc}="m";
{\ar@{^{(}->} "k"; "p"};
{\ar@{^{(}->} "k"; "m"};
{\ar@{->>} "m"; "p"};
\end{xy}
\end{center}

The complement $\tlc_0$ of $\calc$ in $\tlc$ contains a $\zz\oplus\zz/2$ direct summand and a $\zz^\infty$ subgroup.
\end{maintheorem}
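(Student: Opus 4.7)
The plan is to mirror the construction of $\lc$ in Theorem \ref{thm:mainthm}, carried out in the marked oriented category, and then to exhibit the claimed subgroup and direct summand in $\tlc_0$ by means of orientation-sensitive link invariants.

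First, I would verify that $\chi$-concordance gives a well-defined equivalence relation on marked oriented links and that connected sum descends to it. Reflexivity and symmetry are immediate. The workhorse is the observation that $L\#-L$ bounds an oriented disk-plus-annuli surface in $D^4$ (a disk capping the marked component, together with product annuli on the remainder); this both provides inverses and supplies the standard stacking argument for transitivity: given concordance surfaces $F_{01}$ and $F_{12}$, glue them across an $L_1\#-L_1$ surface and then collapse to obtain a surface of the required form bounded by $-L_0\#L_2$. Compatibility with connected sum is proved in the same way.

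Second, I would define the retraction $\pi\colon\tlc\to\calc$ by sending a marked oriented link to its marked component, viewed as an oriented knot. Well-definedness is built into Definition \ref{def:chiconc}: the disk component of any concordance surface $F$ has boundary equal to the marked component of $-L_0\#L_1$, hence is a smooth oriented slice disk realising a classical concordance between the two marked-component knots. Composing $\pi$ with the natural inclusion $\calc\hookrightarrow\tlc$ yields the identity of $\calc$, so $\tlc\cong\calc\oplus\tlc_0$ with $\tlc_0=\ker\pi$. The surjection $\tlc\twoheadrightarrow\lc$ is induced by forgetting orientations on nonmarked components; it is well defined because an oriented $\chi$-concordance is \emph{a fortiori} an unoriented one, and it is surjective because every partly oriented link lifts by choosing orientations.

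Third, to exhibit the subgroup and direct summand of $\tlc_0$, I would produce explicit candidate links together with corresponding homomorphisms. The $\zz$ factor is naturally carried by the positive oriented Hopf link $\tilde H$, detected by a signed linking-number or signature invariant that sees the orientation on the second component and so is invisible in $\lc_0$. The $\zz/2$ factor is detected by an invariant already defined on $\lc_0$ (and hence pulled back via $\tlc\to\lc$), with a section realised by an explicit $2$-torsion example such as an appropriate orientation of a Hopf or Whitehead link. For the $\zz^\infty$ subgroup, I would evaluate the Tristram--Levine signatures $\sigma_\omega$ on an infinite family such as the oriented $(2,2n)$-torus links; varying $\omega$ over suitable roots of unity separates infinitely many $\zz$-independent classes.

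The main obstacle will be verifying that the proposed signature-type invariants descend to honest homomorphisms on $\tlc_0$. A $\chi$-concordance surface can have arbitrarily large first Betti number (unmarked components can be paired off into annuli in complicated ways rather than into product cylinders), so the classical bounds on the change of a signature across such a cobordism need to be refined. The remedy is to combine additivity under connected sum with a growth-rate estimate: if a link $L$ is of finite order in $\tlc_0$, then the bounded variation of $\sigma_\omega$ across each concordance forces $\sigma_\omega(L)=0$, allowing detection of infinite-order elements. Working out these bounds --- an adaptation of Florens's cobordism-theoretic analysis of link signatures \cite{florens1,florens2} to the class of oriented surfaces in Definition \ref{def:chiconc} --- is where the technical content lies.
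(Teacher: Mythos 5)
Your overall architecture (equivalence relation, retraction onto $\calc$ via the marked component, forgetful surjection to $\lc$, and detecting $\tlc_0$ by orientation-sensitive invariants) matches the paper's, and your $\zz^\infty$ subgroup via Levine--Tristram signatures of the $(2,2k)$-torus links $S(2k,1)$ is exactly the paper's alternative argument (the technical worry you raise is handled not by a new growth estimate but by the Murasugi--Tristram inequality, which shows $\sigma_\omega$ vanishes on $\chi$-slice links with $n_\omega=0$ and hence descends to a homomorphism on the subgroup $\tcaln_\omega$; the paper's primary argument instead uses double branched covers and Lisca's theorem on lens spaces bounding rational balls).

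There is, however, a genuine gap in your treatment of the $\zz\oplus\zz/2$ direct summand. The $\zz$ factor is fine: the total linking number with the marked component lifts to $\tilde l:\tlc\to\zz$ and the positive Hopf link has $\tilde l=1$. But your $\zz/2$ factor fails on both counts. First, the only $\zz/2$-invariant available on $\lc_0$ is the mod $2$ linking number $l$, and its pullback to $\tlc$ is just $\tilde l \bmod 2$; this detects nothing beyond the $\zz$ summand already generated by the Hopf link, so it cannot split off an independent $\zz/2$. Second, your candidate $2$-torsion elements do not work: the oriented Hopf link has $\tilde l=\pm1$ and hence \emph{infinite} order in $\tlc$ (in contrast to $\lc$, where it is $2$-torsion), and the order of the Whitehead link in $\tlc$ is listed in the paper as an open question. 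The missing ingredient is that in the marked oriented category the concordance surface is an oriented disk-plus-annuli, with no M\"obius bands, so the number of components modulo $2$ is preserved; this yields a homomorphism $\mu:\tlc\to\zz/2$, $\mu(L)=1+\#L \pmod 2$, which does \emph{not} factor through $\lc$. The paper takes $U$ to be the marked oriented two-component unlink, which satisfies $U=-U$ (hence has order at most $2$), has $\mu(U)=1$ and $\tilde l(U)=0$; together with the Hopf link and the pair $(\tilde l,\mu)$ this splits off $\zz\oplus\zz/2$ from $\tlc_0$. Without $\mu$ and the unlink (or some substitute), your argument establishes only a $\zz$ summand.
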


We find that many familiar tools from the study of knot concordance are  applicable to these link concordance groups.
Let $\caln<\lc$ and $\tcaln<\tlc$ be the subgroups consisting of classes represented by links with nonzero determinant.
More generally for $\omega\in S^1\setminus\{1\}$ we let $\tcaln_\omega<\tlc$ be the subgroup
of links with vanishing Levine-Tristram nullity $n_\omega$, so that $\tcaln=\tcaln_{-1}$.  The following contains a collection of invariants that may be used
in studying $\lc$ and $\tlc$.

\begin{maintheorem}
\label{thm:invts}
Taking total linking number with the marked component gives homomorphisms
\begin{align*}
l:\lc&\to\zz/2,\\
\tl:\tlc&\to\zz.
\end{align*}
Taking double branched covers gives group homomorphisms
\begin{align*}
\calf:\caln&\to\Theta^3_\qq,\\
\tcalf:\tcaln&\to\Theta^3_{\qq,\spin}
\end{align*}
to the rational homology cobordism group (resp., spin rational homology cobordism group) of (spin) rational homology three-spheres.
Link signature and the \ozsvath-\szabo\ correction term of a spin structure in the double branched cover, give homomorphisms
$$\sigma, \delta:\tcaln\to\zz.$$
The sum $\sigma+\delta$
is divisible by 8 for all links with nonzero determinant and is zero for alternating links.

For each prime-power root of unity $\omega\in S^1\setminus\{1\}$, the Levine-Tristram signature $\sigma_\omega$ gives a homomorphism
from $\tcaln_\omega$ to the integers.
\end{maintheorem}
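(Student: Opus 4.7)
The plan is to verify the four statements in turn, with the double branched cover construction serving as the common technical device. First, for the linking number homomorphisms $l$ and $\tl$, I would note that additivity under connected sum is manifest since the sum is formed at the marked components. To check invariance under $\chi$-concordance, suppose $L$ bounds a surface whose components are a disk $D$ bounded by the marked component $K_0$, annuli $A_i$, and, in the partly oriented case, \mobius\ bands $M_j$. Then $H_1(D^4\setminus D;\zz)\cong\zz$ is generated by a meridian of $K_0$ and records the linking number with $K_0$. Each annulus $A_i\subset D^4\setminus D$ yields the relation $[K'_i]+[K''_i]=0$ among its two boundary components, and each \mobius\ band has boundary representing twice its core, hence $0$ mod $2$. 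Summing these relations shows the total linking with $K_0$ vanishes integrally (resp.\ mod $2$), giving invariance.

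Next, for the double branched cover maps, I would observe that $\Sigma(L)$ is a rational homology $3$-sphere exactly when $\det L\neq 0$, and $\Sigma(L_0\#L_1)=\Sigma(L_0)\#\Sigma(L_1)$, so it suffices to show that if $L\in\caln$ is $\chi$-slice then $\Sigma(L)$ bounds a rational homology $4$-ball. The candidate is the double cover $W$ of $D^4$ branched along the $\chi$-slice surface $F$, well-defined via the natural $\zz/2$-quotient of $H_1(D^4\setminus F)$ sending every meridian to $1$. The degree-two branched cover formula gives $\chi(W)=2\chi(D^4)-\chi(F)=1$, and a Mayer--Vietoris computation on the decomposition of $F$ into a disk, annuli, and \mobius\ bands, using $\det L\neq 0$ to control $H_1$, should produce $H_\ast(W;\qq)\cong H_\ast(\mathrm{pt};\qq)$; this rational acyclicity will be the principal technical obstacle. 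In the marked oriented case the oriented surface $F$ equips $W$ with a spin structure restricting to the canonical one on $\partial W$, yielding the lift $\tcalf$ to $\Theta^3_{\qq,\spin}$.

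Third, the classical link signature $\sigma$ and the Levine--Tristram signatures $\sigma_\omega$ are additive under connected sum, and I would obtain their invariance under $\chi$-concordance in $\tcaln$ (resp.\ $\tcaln_\omega$) by a Tristram--Murasugi-type argument: $\sigma_\omega(L)$ can be read off from the $\omega$-twisted signature of a branched cover of $D^4$ along a bounded surface, which vanishes by a twisted analogue of the above acyclicity computation under the nullity hypothesis at $\omega$. The correction term $\delta$ is defined as the composition of $\tcalf$ with the \ozsvath--\szabo\ $d$-invariant of the canonical spin structure. Finally, to prove $\sigma+\delta\equiv 0\bmod 8$ when $\det L\neq 0$, I would pick a spin filling $X$ of $\Sigma(L)$ (for example, the double cover of $D^4$ branched over a pushed-in Seifert surface) and combine the standard relation between $d(\Sigma(L),\spincs)$ and $\sigma(X)/4$ (mod $2$) with the identification of $\sigma(L)$ with $\sigma(X)$ up to an $8$-divisible error. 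For alternating $L$, choosing $X$ to be the (definite) branched cover of $D^4$ over a Goeritz-type chessboard surface realises this identity sharply, yielding $\sigma+\delta=0$.
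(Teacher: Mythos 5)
Your overall architecture matches the paper's for most of the statement: branched double covers giving $\calf$ and $\tcalf$, a Murasugi--Tristram argument for $\sigma_\omega$, the spin filling by the double cover of $D^4$ branched over a pushed-in Seifert surface for $\sigma+\delta\equiv0\pmod 8$, and a definite chessboard filling for alternating links. Your argument for $l$ and $\tl$ is genuinely different from the paper's and is correct: the paper (Lemma \ref{lem:F1F2} and Lemma \ref{lem:Zlinking}) runs a Morse-theoretic level-set argument in which the sum of linking numbers between the level sets of the two pieces of $F$ is constant, changing by an even number at saddles; you instead read the total linking number off as the class of $\partial(F\setminus D)$ in $H_1(D^4\setminus D)\cong\zz$, which vanishes integrally for oriented annuli and mod $2$ for \mobius\ bands since their boundaries are twice their cores. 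Your version is cleaner for this purpose; the paper's proves a slightly more general statement about arbitrary splittings $F=F_1\sqcup F_2$.

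There are two genuine gaps. First, the step you defer as ``the principal technical obstacle'' --- that the double cover of $D^4$ branched along a surface with $\chi=1$ and no closed components, bounded by a link of nonzero determinant, is a rational homology ball --- is the load-bearing ingredient for $\calf$, $\tcalf$ and $\delta$, and the method you hint at (Mayer--Vietoris over the decomposition of $F$ into disk, annuli and \mobius\ bands) does not obviously work: the branched cover does not decompose along the components of $F$, because the relevant double cover of $D^4\setminus F$ is the connected one classified by the total mod $2$ meridian class. The paper's Proposition \ref{prop:2covers2} ignores the component structure entirely, applying the relative Gysin sequence with $\zz/2$ coefficients to $(D^4\setminus F,S^3\setminus L)$ to obtain $H^1(N,\partial N;\zz/2)=0$, and then using $\det L\ne0$, Poincar\'e duality and $\chi(N)=2-\chi(F)$. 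Second, for alternating links your plan only yields $\sigma(L)+4d(\Sigma_2(S^3,L),\spincs')=0$ for \emph{some} spin structure $\spincs'$ arising from the Goeritz filling; since $\Sigma_2(S^3,L)$ carries $2^{m-1}$ spin structures for an $m$-component link, one must still identify $\spincs'$ with the spin structure $\spincs_L$ determined by the orientation of $L$, which is the one used to define $\delta$. That identification --- carried out in the paper's Lemma \ref{lem:alt} via characteristic sublinks of the Kirby diagram of the chessboard cover and a comparison on lifts of sums of meridians --- is the bulk of that proof and is absent from your proposal. (Relatedly, the existence and naturality of $\spincs_L$ itself, i.e.\ the Turaev-style correspondence of Proposition \ref{prop:spin} between quasiorientations of $F$ and spin structures on the branched cover, is asserted rather than constructed, though that part can reasonably be cited.)
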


We note also that the Fox-Milnor condition on the Alexander polynomial of slice knots extends to links bounding certain surfaces of Euler characteristic one in the four-ball \cite{orevkov,florens2}.  We are currently investigating, jointly with Stefan Friedl, possible extensions of the Fox-Milnor obstruction.

We also consider topological link concordance groups $\lctop$ and $\tlctop$,  where we replace smooth with locally flat embeddings.  We state topological versions of Theorems
\ref{thm:mainthm} and \ref{thm:or} in Section \ref{sec:top} but for now we note some examples, based on work of Davis and Cha-Kim-Ruberman-Strle, distinguishing the two categories.

\begin{maintheorem}
\label{thm:TOPeg}
Let $K$ be an alternating knot with negative signature (for example the right handed trefoil), and let
$C$ be a knot with Alexander polynomial one and  $\delta(C)\ne0$.  
The partly-oriented links $L_2\#H$ and $L_3\#H$shown in Figure \ref{fig:DiffTop} are trivial in $\lc_{\Top}$ and nontrivial in $\lc$.

Orienting all components of $L_2\#H$ and $L_3\#H$ results in marked oriented links which are trivial in $\tlc_{\Top}$ and nontrivial in $\tlc$, under the same hypotheses on $K$ and $C$.
\end{maintheorem}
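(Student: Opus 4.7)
The plan is to establish the topological triviality and the smooth nontriviality by two independent arguments, using Freedman's theorem for the former and the signature/correction-term homomorphisms of Theorem \ref{thm:invts} for the latter.

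For the topological half, I would exhibit, for each of $L_2\#H$ and $L_3\#H$, an explicit locally flat properly embedded surface $F\subset D^4$ consisting of one disk (bounded by the marked component) together with annuli and M\"obius bands, satisfying $\chi(F)=1$; in the marked oriented case the M\"obius bands are replaced by annuli and $F$ is oriented compatibly. The crucial input is Freedman's theorem applied to the Alexander-polynomial-one knot $C$: it bounds a locally flat disk $\Delta_C$ in $D^4$, which we use in place of a (nonexistent) smooth slice disk for $C$. The remaining components, including the alternating knot $K$, are then joined by standard band moves built into the Davis and Cha-Kim-Ruberman-Strle construction, while the Hopf summand $H$ contributes a M\"obius band (resp.\ annulus) spanning its two components. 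An Euler-characteristic count confirms $\chi(F)=1$ and verifies that the marked component of $L_i\#H$ is the boundary of the disk piece of $F$.

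For the smooth half, I would orient all components (so $L_i\#H$ lies in $\tlc$) and first check that its determinant is nonzero, placing it in $\tcaln$. By additivity of $\sigma$ and $\delta$ under connected sum, the computation reduces to contributions from $K$, $C$, and $H$. Since $K$ is alternating, Theorem \ref{thm:invts} gives $\sigma(K)+\delta(K)=0$; combined with $\sigma(K)<0$ we obtain $\delta(K)>0$. Since $C$ has trivial Alexander polynomial its signature vanishes, while $\delta(C)\ne 0$ by hypothesis. Because $H$ is alternating we also have $\sigma(H)+\delta(H)=0$. These contributions cannot make both $\sigma(L_i\#H)$ and $\delta(L_i\#H)$ vanish simultaneously, so at least one is nonzero and $L_i\#H$ is nontrivial in $\tlc$. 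To descend to $\lc$, I would apply the double branched cover homomorphism $\calf:\caln\to\Theta^3_\qq$ of Theorem \ref{thm:invts}: if $L_i\#H$ were trivial in $\lc$, the double branched cover would bound a rational homology four-ball, and then Ozsv\'ath-Szab\'o $d$-invariants (independent of any orientation choice on the unmarked components) would vanish, contradicting the computation above.

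The main obstacle is the topological step: Freedman's disk $\Delta_C$ is not explicit, so one cannot draw $F$ directly. The cleanest route is to excise a regular neighbourhood $\Delta_C\times D^2$ of the Freedman disk from $D^4$, analyse the induced link in the boundary, and identify it with a link that is visibly $\chi$-slice via an explicit ribbon surface that can then be glued to $\Delta_C$. Careful bookkeeping is needed to verify that the marked component bounds the disk piece (rather than an end of an annulus) and, in the oriented case, that all pieces can be oriented compatibly.
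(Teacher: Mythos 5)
Your proposal has genuine gaps in both halves. For the smooth half, the decisive error is the appeal to additivity of $\sigma$ and $\delta$ under connected sum: the links $L_2$ and $L_3$ are \emph{not} connected sums involving $K$ or $C$ --- they are two-component links obtained by tying a zero-framed band into the knot $K$ (resp.\ $C$) --- so there is no decomposition to which additivity applies. Worse, since $L_i$ has Alexander polynomial one it is topologically concordant to the Hopf link (this is exactly what makes the topological half true), and the signature is an obstruction already in the locally flat category (Lemma \ref{lem:sig} holds there); hence $\sigma(L_i\#H)$ is forced to vanish and can detect nothing. The paper instead argues structurally: a $\chi$-nullconcordance of the three-component link $L_i\#H$ consists of a disk together with either two \mobius\ bands or one annulus; the \mobius\ case is excluded by the mod $2$ linking number count of Lemma \ref{lem:F1F2}, and the annulus case is precisely a concordance in the traditional sense from $L_i$ to $H$. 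That concordance is then ruled out for $L_3$ because it would force $C$ to be smoothly slice (contradicting $\delta(C)\ne0$), and for $L_2$ by the theorem of Cha--Kim--Ruberman--Strle \cite{ckrs}; there is no reason to expect a $\delta$-computation on the link $L_2\#H$ itself to substitute for that input. This reduction to a traditional concordance is entirely absent from your proposal.

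For the topological half, the correct input is Davis's theorem \cite{davis} that a two-component link with Alexander polynomial one is topologically concordant to the Hopf link, applied to $L_2$ and $L_3$ themselves (their link Alexander polynomials are trivial by \cite{ckrs}); since $H=-H$, this immediately gives $[L_i\#H]=0$ in $\lctop$ and, after orienting, in $\tlctop$. Your plan of building an explicit surface from a Freedman disk for $C$ does not address $L_2$ at all ($L_2$ contains $K$, which has nonzero signature and so is not slice in any category), and even for $L_3$ you concede you cannot complete the construction. As written, neither direction of the theorem is established.
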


\begin{figure}[htbp]
{\centering
\ifpic
\psfrag{K}{\scriptsize$K$}
\includegraphics[width=14cm]{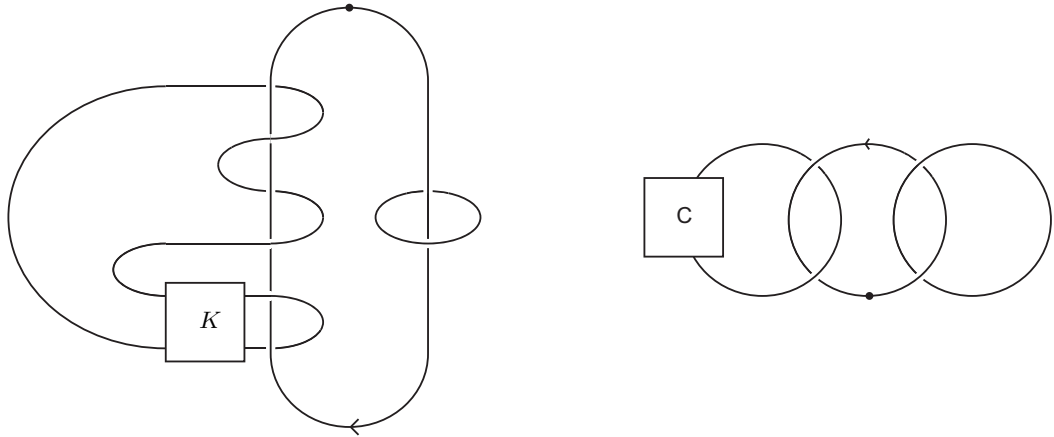}
\else \vskip 5cm \fi
\begin{narrow}{0.3in}{0.3in}
\caption{
\bf{Partly oriented links $L_2\# H$ and $L_3\# H$.  The band shown passing through the box marked $K$ is tied in the knot $K$ with zero framing (cf.~\cite{ckrs}).}}
\label{fig:DiffTop}
\end{narrow}
}
\end{figure}

In \cite{lisca1}, Lisca proved the slice-ribbon conjecture for two-bridge knots.  His results also covered the case of two-bridge links.
Combining his work with an observation in this paper yields the following slice-ribbon result for two-bridge links.

\begin{maincor}
\label{cor:2bridge}
Let $S(p,q)$ be a two-bridge link. If $F$ is a smoothly properly embedded surface in $D^4$ with $\chi(F)=1$ and no closed components, bounded by $S(p,q)$, then the link also bounds a ribbon embedding of $F$.
\end{maincor}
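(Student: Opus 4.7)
The plan is to reduce to Lisca's classification of lens spaces bounding rational homology balls in \cite{lisca1}, by passing to the double branched cover of $D^4$ along $F$, and then to exhibit a ribbon surface of the same topology as $F$ for each admissible pair $(p,q)$.

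First, I would analyze the topology of $F$. The two-bridge link $S(p,q)$ is a knot if $p$ is odd and a two-component link if $p$ is even. Since $\chi(F)=1$ and $F$ has no closed components, the classification of surfaces forces $F$ to be a disk in the knot case (so that the hypothesis reduces to ordinary sliceness), and to be the disjoint union of a disk and a \mobius\ band in the two-component case (a connected surface with two boundary circles satisfies $\chi\le 0$, so $F$ must be disconnected, and the only way to achieve $\chi(F)=1$ with no closed components is disk $\sqcup$ \mobius\ band).

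Next, I would form the double cover $W=\Sigma_2(D^4,F)$, whose boundary is $\Sigma_2(S^3,S(p,q))=L(p,q)$. An Euler-characteristic calculation gives $\chi(W) = 2\chi(D^4) - \chi(F) = 1$. A direct analysis of a standardly embedded \mobius\ band in a four-ball (whose double branched cover is a solid handlebody $S^1\times D^3$) shows that a \mobius-band component of $F$ contributes nothing to the rational homology of $W$ beyond boundary effects; together with the standard computation for the disk component, this gives that $W$ is a rational homology four-ball with boundary $L(p,q)$.

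With $W$ in hand I would invoke Lisca's classification: the pairs $(p,q)$ for which $L(p,q)$ bounds a rational homology ball form a short explicit list described by continued-fraction expansions, and for each such pair Lisca exhibits a ribbon surface in $S^3$ bounded by $S(p,q)$ whose double branched cover realises such a rational ball---in the knot case a ribbon disk, and in the link case a ribbon disk together with a ribbon \mobius\ band. The observation of the present paper is that in every case the topology of these standard ribbon surfaces agrees with the topology of $F$ determined in the first step, so $S(p,q)$ bounds a ribbon embedding of $F$ as required. The main obstacle is the verification that $W$ is a rational homology ball when $F$ contains a \mobius\ band component, since the double cover branched along a nonorientable surface demands a careful choice of model near the branch locus; I expect to handle this by a direct Mayer--Vietoris computation on a tubular neighbourhood of the \mobius\ band inside $D^4$.
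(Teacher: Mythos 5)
Your proposal is correct and takes essentially the same route as the paper: determine that $F$ is a disk (for $p$ odd) or a disk together with a \mobius\ band (for $p$ even), show that the double cover of $D^4$ branched along $F$ is a rational homology ball bounded by $L(p,q)$, and then invoke Lisca's classification \cite[Theorem 1.2]{lisca1} to produce a ribbon surface of the same topological type. The homological step you defer to a Mayer--Vietoris computation is exactly the paper's Proposition \ref{prop:2covers2}, proved there by a relative Gysin/Mayer--Vietoris argument valid for nonorientable $F$ (using that $\det S(p,q)=p\neq 0$), so no separate analysis of the \mobius\ band component is needed.
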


\noindent{\bf Related work.}
The problem of knots and links bounding non-orientable surfaces in the four-ball has recently been considered by Gilmer-Livingston \cite{gl} who study connected nonorientable surfaces bounded by a knot.  Orevkov and Florens \cite{orevkov,florens1,florens2} have considered the problem of links bounding orientable surfaces of Euler characteristic one.  Baader \cite{baader} has defined a notion of cobordism distance between oriented links such that $\chi$-sliceness is equivalent to cobordism distance zero from the unknot.

Hosokawa \cite{hosokawa} gave a different definition of a concordance group $\calh$ of links containing $\calc$ as a direct summand, following a suggestion of Fox.  Hosokawa also showed that
$$\calh\cong\calc\oplus\zz,$$
in contrast to our results.

\noindent{\bf Acknowledgements.}
This paper was inspired by Lisca's work on two-bridge links and lens spaces.  We are grateful to Stefan Friedl, Cameron Gordon, Matt Hedden, Slaven Jabuka, Paul Kirk, Paolo Lisca, Swatee Naik and Jake Rasmussen for helpful comments and conversations.


\section{A link concordance group using smooth surfaces in $D^4$}
\label{sec:pf}

In this section we prove Theorem \ref{thm:mainthm}.  We show that $\chi$-concordance gives rise to a group $\lc$ which contains the classical knot concordance group as a direct summand,  and we describe some group homomorphisms from $\lc$.  We begin by describing $\chi$-concordance using embedded surfaces in the cylinder $S^3\times [0,1]$.


\begin{lemma}
\label{lem:cyl}
Partly oriented links $L_0$, $L_1$ are $\chi$-concordant if and only if there exists a smoothly properly embedded surface $F_0$ in $S^3\times [0,1]$ satisfying 
\begin{itemize}
\item $F_0$ is a disjoint union of annuli, including one oriented annulus $A$, and M\"{o}bius bands;
\item $F_0\cap S^3\times\{i\}=L_i\times\{i\},\quad i=0,1;$
\item $\partial A=\vec{K}_1\times\{1\}\cup \vec{K}_0\!^r\times\{0\},$
where $\vec{K}_i$ is the oriented component of $L_i$ and $\vec{K}_0\!^r$ denotes the knot $\vec{K}_0$ with the opposite orientation.
\end{itemize}
\end{lemma}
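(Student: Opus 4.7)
The plan is to translate between the two surface pictures via the classical correspondence between slice disks for $-K_0\#K_1$ in $D^4$ and concordances between $K_0$ and $K_1$ in $S^3\times [0,1]$, adapted to the $\chi$-concordance setting.  The two moves involved are a band-surgery that undoes the connect sum on the marked component, and a tubing operation that merges the two resulting disks into the required oriented annulus $A$.

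For the direction from $\chi$-concordance to cylinder cobordism, start with $F\subset D^4$ bounded by $-L_0\#L_1$ as in Definition~\ref{def:chiconc}, with disk component $D$ on the marked component $\vec{K}_0\!^r\#\vec{K}_1$.  Let $\alpha\subset D$ be a cocore arc of the connect-sum band $b\subset S^3$, push $\alpha$ slightly into the interior of $D^4$ to a properly embedded disk $\Delta$ with $\Delta\cap F=\alpha$, and perform ambient surgery on $F$ along $\Delta$.  This replaces $D$ by two disks $D_0,D_1$ bounded by $\vec{K}_0\!^r$ and $\vec{K}_1$ respectively, producing $F'\subset D^4$ with $\partial F'=-L_0\sqcup L_1$ and the same annular and M\"{o}bius-band components as $F$.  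Since $-L_0\sqcup L_1$ is split, isotope so that $-L_0$ and $L_1$ lie in disjoint $3$-balls on $\partial D^4$, and identify $D^4=D^3\times[0,1]$ so that these $3$-balls are the caps $D^3\times\{0,1\}$.  The inclusion $D^3\hookrightarrow S^3$ then embeds $D^4\hookrightarrow S^3\times[0,1]$ with $-L_0\subset S^3\times\{0\}$ and $L_1\subset S^3\times\{1\}$.  Finally, tube $D_0$ and $D_1$ together along an arc $\gamma\subset S^3\times[0,1]$ joining interior points of $D_0$ and $D_1$, disjoint from the rest of $F'$: this replaces $D_0\sqcup D_1$ by an annulus $A$ from $\vec{K}_0\!^r\times\{0\}$ to $\vec{K}_1\times\{1\}$, yielding the required surface $F_0\subset S^3\times[0,1]$.

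The reverse direction is obtained by running these moves backwards.  Given $F_0\subset S^3\times[0,1]$ of the stated form, surger $A$ along a cocore arc to split it into two disks $D_0,D_1$ on the two ends of the cylinder, arrange the resulting surface inside $D^3\times[0,1]=D^4\subset S^3\times[0,1]$, and band $D_0$ and $D_1$ together along the connect-sum band $b\subset\partial D^4$ to form the required disk $D$ in $D^4$ bounded by the marked component of $-L_0\#L_1$.

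The main technical point to handle carefully is the orientation tracking: the reversal $\vec{K}_0\!^r$ appearing on the $t=0$ end of $A$ must match the reversal built into $-L_0\#L_1$.  This is why $\alpha$ is chosen as a cocore arc of the connect-sum band and why $b$ is chosen compatibly with the connect-sum orientation convention in the reverse direction.  Everything else is a routine $4$-dimensional isotopy argument.
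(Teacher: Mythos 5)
Your overall plan---translating between $(D^4,F)$ and $(S^3\times[0,1],F_0)$ via the classical slice-disk/concordance correspondence---is the right one, and it is the paper's approach. But your execution passes, in both directions, through an intermediate configuration that does not exist in general: a pair of disjoint properly embedded disks $D_0,D_1\subset D^4$ with $\partial D_0=\vec{K}_0^{\,r}$ and $\partial D_1=\vec{K}_1$. Such disks would be slice disks for $K_0$ and $K_1$ individually, so their existence would force both marked components to be slice. Taking $L_0=L_1=K$ for any non-slice knot $K$ gives an immediate contradiction in each direction: $-K\#K$ is slice but $K$ bounds no disk in $D^4$ (forward direction), and the product annulus $K\times[0,1]$ cannot be split into two disks (reverse direction). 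Concretely, the step ``push $\alpha$ slightly into the interior of $D^4$ to a properly embedded disk $\Delta$ with $\Delta\cap F=\alpha$'' is where the argument breaks: the required embedded compressing disk with interior disjoint from $F$ need not exist, and producing one is equivalent to the false conclusion. Likewise, in the reverse direction an annulus whose two boundary circles lie on different components of $\partial(S^3\times[0,1])$ admits no boundary-compression at all (the arc $\beta\subset\partial(S^3\times[0,1])$ that would complete $\partial\Delta$ would have to join the two boundary spheres), so ``surger $A$ along a cocore arc to split it into two disks on the two ends of the cylinder'' is not an available move.

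The fix is to never separate the two ends. Going from $(D^4,F)$ to $(S^3\times[0,1],F_0)$, attach a $(3,1)$-handle pair: glue $(D^3\times[0,1],\,\gamma\times[0,1])$ to $(D^4,F)$ along a neighbourhood of the $2$-sphere $S\subset S^3$ splitting the connected sum, where $S$ meets $-L_0\#L_1$ in two points of the marked component and $\gamma\subset D^3$ is a trivial arc joining them. This turns $D^4$ into $S^3\times[0,1]$ and turns the disk $D$ directly into the oriented annulus $A$ (a disk with a band attached along two boundary arcs), leaving the other annuli and \mobius\ bands untouched; at no stage do $\vec{K}_0$ and $\vec{K}_1$ bound separate disks. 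Conversely, given $F_0$, choose a spanning arc of $A$ from $\vec{K}_0^{\,r}\times\{0\}$ to $\vec{K}_1\times\{1\}$ and delete an open tubular neighbourhood of it from the pair $(S^3\times[0,1],F_0)$: the cylinder becomes $D^4$ and the annulus becomes a single disk whose boundary is the connected sum of the marked components, which is exactly the surface required by Definition~\ref{def:chiconc}. Your closing remark about orientation bookkeeping then applies verbatim to these corrected moves.
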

\begin{proof}
This follows from Definition \ref{def:chiconc} as in the standard knot situation: one passes between $(D^4,F)$ and $(S^3\times [0,1],F_0)$ by drilling out an arc of $A$ or attaching a $(3,1)$-handle pair.
\end{proof}

\begin{lemma}
\label{lem:equiv}
$\chi$-concordance is an equivalence relation.
\end{lemma}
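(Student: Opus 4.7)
The plan is to translate the problem to the cylinder $S^3 \times [0,1]$ via Lemma \ref{lem:cyl} and verify reflexivity, symmetry, and transitivity in turn.

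For reflexivity, given a partly oriented link $L$, take $F_0 = L \times [0,1] \subset S^3 \times [0,1]$. This is a disjoint union of annuli, one per component of $L$; the annulus over the marked component inherits an orientation from the product structure satisfying the boundary condition in Lemma \ref{lem:cyl}, so $L \sim L$. For symmetry, if $F_0$ witnesses $L_0 \sim L_1$, I would pull back by the reflection $(x,t) \mapsto (x, 1-t)$; the resulting surface has the same topological type, with the roles of $L_0$ and $L_1$ swapped, and after possibly reversing the orientation on the oriented annulus the boundary condition again holds.

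Transitivity is the main work. Suppose $F_0$ and $F_1$ realize $L_0 \sim L_1$ and $L_1 \sim L_2$ respectively; I would stack them in $S^3 \times [0,2] \cong S^3 \times [0,1]$, arranging a collar near $L_1 \times \{1\}$ so that $F = F_0 \cup F_1$ is a smoothly properly embedded surface. Each connected component of $F$ is assembled from annuli and Möbius bands by gluing along boundary circles in $L_1$, so has Euler characteristic zero; the possibilities are annulus, Möbius band, torus, or Klein bottle. The two oriented annuli from $F_0$ and $F_1$ share their $L_1$-boundary on the marked component of $L_1$ and glue, with compatible orientations, into a single oriented annulus from the marked component of $L_0$ to that of $L_2$.

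The main obstacle is that closed components, namely tori or Klein bottles (the latter arising when two Möbius bands meet along a common boundary circle of $L_1$), can appear in $F$. However, any such component lies entirely in the interior of $S^3 \times [0,1]$ and is disjoint from the rest of $F$, so it may simply be discarded. The remaining surface is a disjoint union of annuli (including the distinguished oriented annulus) and Möbius bands with boundary $L_0 \times \{0\} \sqcup L_2 \times \{1\}$, and therefore realizes $L_0 \sim L_2$ via Lemma \ref{lem:cyl}.
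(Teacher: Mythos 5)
Your proof is correct and follows essentially the same route as the paper: reflexivity via a product (equivalently, spun) cobordism, symmetry via a reflection, and transitivity by stacking the cylinder cobordisms of Lemma \ref{lem:cyl} and discarding the closed (torus or Klein bottle) components that may arise. The only cosmetic difference is that you carry out reflexivity and symmetry in $S^3\times[0,1]$ rather than in $D^4$, which is justified by the ``if and only if'' in Lemma \ref{lem:cyl}.
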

\begin{proof}
For any partly oriented link $L$, $-L\#L$ is $\chi$-nullconcordant ($\chi$-concordant to the unknot) by the usual argument for knots.  That is to say,
the connected sum may be arranged so that it is symmetric about a plane containing two points on the oriented component $-\!\vec{K}\#\vec{K}$.  Rotating the link about this plane in four-dimensional half-space (which is diffeomorphic to the punctured four-ball) yields a surface $F$ which is a disjoint union of a disk bounded by $-\!\vec{K}\#\vec{K}$ and one annulus for each unoriented component of $L$.

Symmetry is immediate from Definition \ref{def:chiconc}: applying an orientation reversing diffeomorphism to the four-ball takes a surface bounded by $-L_0\#L_1$ to one bounded by $-L_1\#L_0$.  Transitivity follows by composing the cobordisms $F_0$ from Lemma \ref{lem:cyl}.  Any resulting closed components may be discarded.
\end{proof}

\begin{lemma}
\label{lem:groupsum}
The set of $\chi$-concordance classes of partly oriented links is an abelian group $\lc$ under connected sum, which contains the knot concordance group as a direct summand.
The direct complement $\lc_0$ consists of equivalence classes of partly oriented links $L$ whose oriented component $\vec{K}$ is a slice knot.
An isomorphism
$$\lc\stackrel{\cong}{\longrightarrow}\calc\oplus\lc_0$$
is given by
$$[L]\mapsto([\vec{K}],[-\!\vec{K}\#L]).$$
\end{lemma}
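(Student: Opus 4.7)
The plan is to verify the group axioms for connected sum on $\chi$-concordance classes and then exhibit an explicit splitting via the marked oriented component.

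First I would show that $\#$ descends to $\chi$-concordance classes. Given a $\chi$-concordance $F \subset S^3 \times I$ from $L_0$ to $L_0'$ (in the cylinder form of Lemma \ref{lem:cyl}) and any partly oriented link $L_1$, I juxtapose $F$ with the product cobordism $L_1\times I$ and perform a boundary band-sum along a small arc on the oriented annulus of $F$ and an arc on the marked component of $L_1\times I$. The result is a new cobordism in $S^3\times I$ from $L_0\#L_1$ to $L_0'\#L_1$, again a disjoint union of annuli (with a single oriented annulus connecting the marked components) and M\"{o}bius bands, so it is a $\chi$-concordance. Associativity and commutativity of $\#$ at the class level then follow as in the classical knot case; together with Lemma \ref{lem:equiv} (which identifies the unknot as identity and $[-L]$ as inverse of $[L]$), this gives $\lc$ the structure of an abelian group.

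Next I would split off $\calc$. Taking the marked oriented component defines a map $\pi: \lc \to \calc$ by $[L]\mapsto [\vec{K}]$: by Lemma \ref{lem:cyl}, a $\chi$-concordance from $L_0$ to $L_1$ contains an oriented annulus realising a smooth concordance between $\vec{K}_0$ and $\vec{K}_1$, so $\pi$ is well-defined, and it is clearly a homomorphism. The inclusion $\iota:\calc\hookrightarrow\lc$ of oriented knots as partly oriented links satisfies $\pi\circ\iota=\mathrm{id}_\calc$, hence $\lc\cong \calc\oplus\ker\pi$, and by definition $\ker\pi=\lc_0$, the set of classes whose marked component is slice.

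Finally I would identify the canonical projection onto $\ker\pi$ with the formula $[L]\mapsto [-\vec{K}\#L]$. The marked component of $-\vec{K}\#L$ is $-\vec{K}\#\vec{K}$, which is slice, so this class does lie in $\lc_0$; and the map $[L]\mapsto ([\vec{K}],[-\vec{K}\#L])$ is a homomorphism because, using commutativity and associativity of $\#$ up to $\chi$-concordance,
$$-(\vec{K}_0\#\vec{K}_1)\#(L_0\#L_1)\;\sim\;(-\vec{K}_0\#L_0)\#(-\vec{K}_1\#L_1).$$
Its two-sided inverse is $([K],[L'])\mapsto [K\#L']$, with both compositions reducing to the identity using again that $-\vec{K}\#\vec{K}$ is $\chi$-nullconcordant. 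The main technical point requiring care is in the first step, verifying that the band-sum construction preserves the precise form of the concordance surface demanded by Definition \ref{def:chiconc}, but this is purely local near the marked arc and follows the standard knot-theoretic template.
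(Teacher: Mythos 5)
Your proposal is correct and follows essentially the same route as the paper: connected sum descends to $\chi$-concordance classes by band-summing along the oriented annulus of the cobordism in $S^3\times[0,1]$, the splitting is given by $[L]\mapsto[\vec{K}]$ (well-defined because the oriented annulus is an honest knot concordance between the marked components), and the explicit isomorphism follows from $L\sim\vec{K}\#-\!\vec{K}\#L$. The only cosmetic slip is attributing the identity/inverse statement to Lemma \ref{lem:equiv}, whose statement is only that $\chi$-concordance is an equivalence relation; the fact that $-L\#L$ is $\chi$-nullconcordant is established inside its proof.
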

\begin{proof}
Connected sum is well-defined, abelian, and associative for partly oriented links, by a variant of the usual proof for knots (see e.g.~\cite{bz}).
Suppose that $L$, $L_0$ and $L_1$ are partly oriented links, and that $L_0\sim L_1$.  Let $F_0$ be the cobordism  in $S^3\times [0,1]$ between $L_0$ and $L_1$, as in Lemma \ref{lem:cyl}, with oriented annulus component $A$.  Taking connected sum ``along the annulus" $A$ shows that $L_0\# L\sim L_1\# L$.  It follows that connected sum gives a well-defined operation on $\lc$.
The identity is given by the class of the unknot and the inverse of $[L]$ is $[-L]$.  The inclusion of oriented knots into partly oriented links induces a monomorphism $\calc\to\lc$ since $\chi$-concordance of knots is the same as knot concordance.  A splitting homomorphism is given by
$$[L]\longmapsto[\vec{K}],$$
taking the $\chi$-concordance class of a partly oriented link to the concordance class of its oriented component.  It follows that $[L]$ is in the direct complement $\lc_0$ if and only if the oriented component of $L$ is slice.  For any partly oriented link $L$ with oriented component $\vec{K}$ we have $[-\vec{K}\#L]\in \lc_0$ and
$$L\sim\vec{K}\#\!-\!\vec{K}\#L$$
by associativity, from which the stated isomorphism follows.
\end{proof}

We obtain a $\zz/2$-valued homomorphism from $\lc$ using mod 2 linking numbers via the following lemma.

\begin{lemma}
\label{lem:F1F2}
Let $L$ be a link in $S^3$ bounding a smoothly properly embedded surface $F$ in $D^4$, and suppose that $F=F_1\sqcup F_2$ is a disjoint union.  This gives a decomposition of $L$ into $L_1\sqcup L_2$, where $L_i=\partial F_i$.  Then the total mod 2 linking number of $L_1$ with $L_2$ is zero, i.e.
$$\ds\sum_{K_i \,\mathrm{ in }\, L_i}\lk(K_1,K_2)\equiv0\pmod2.$$
\end{lemma}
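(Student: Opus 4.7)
The plan is to track a mod-$2$ total-linking-number invariant through a movie presentation of $F$.  After a small perturbation, I can apply a variant of Lemma~\ref{lem:cyl} to view $F\subset D^4$ as a cobordism in $S^3\times[0,1]$ from $L$ at $t=0$ to $\emptyset$ at $t=1$; the cross-sections $L^t:=F\cap(S^3\times\{t\})$ evolve by ambient isotopy between critical values and undergo a birth/death of a small unknotted circle or a saddle (band) move at each critical value, localized near the critical point.  Since $F_1\cap F_2=\emptyset$, the slice splits compatibly as $L^t=L_1^t\sqcup L_2^t$ and each critical point of the movie belongs to exactly one of $F_1,F_2$.

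Next I would show that
\[
\Lambda(t):=\sum_{K_1\subset L_1^t,\,K_2\subset L_2^t}\lk(K_1,K_2)\pmod 2
\]
is constant, by inspecting each type of move; I can treat moves in $F_1$ and note that $F_2$-moves are symmetric.  Isotopies preserve linking numbers.  At a birth or death in $F_1$, the small unknotted circle $O$ is localized near the critical point, which lies off $F_2$, so $O$ bounds a small disk in $S^3$ disjoint from $L_2^t$; hence $\lk(O,L_2^t)\equiv 0\pmod 2$ and $\Lambda$ is preserved.  At a saddle in $F_1$, the band $b$ is small and, using $F_1\cap F_2=\emptyset$, may be taken disjoint from $L_2^t$; then $b$ supplies a mod-$2$ null-homology in $S^3\setminus L_2^t$ between the old and new forms of $L_1^t$, so their classes in $H_1(S^3\setminus L_2^t;\zz/2)$ coincide and the total mod-$2$ linking with $L_2^t$ is preserved.

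Comparing endpoints then yields $\lk(L_1,L_2)\equiv\Lambda(0)=\Lambda(1)=0\pmod 2$.  The main technical step is the saddle case: this is where disjointness $F_1\cap F_2=\emptyset$ is used essentially, to ensure that the band implementing the saddle in one piece of $F$ can be chosen disjoint from the cross-section coming from the other piece.  (An alternative, purely algebraic approach is to observe that $F_2$ gives a null-homology of $L_2$ in $H_1(D^4\setminus F_1;\zz/2)$, compute the latter via the $\zz/2$-Thom isomorphism for the normal bundle of $F_1$, and track the image of $[L_2]$; this yields the stronger per-component statement $\lk(\partial F_1^{(k)},L_2)\equiv 0\pmod 2$ for each component $F_1^{(k)}$ of $F_1$.)
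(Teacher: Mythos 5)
Your proof is correct and follows essentially the same route as the paper's: both put $F$ in Morse position (the paper uses the radial distance function on $D^4$) and track the total mod-$2$ linking number between the cross-sections of $F_1$ and $F_2$, checking that it is unchanged at births and deaths and changes by an even number at a saddle. Your parenthetical alternative via $H_1(D^4\setminus F_1;\zz/2)$ is a genuinely different (and slightly stronger, per-component) argument, but your main proof coincides with the paper's.
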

\begin{proof}
We may assume the radial distance function $r$ on $D^4$ restricts to give
a Morse function on $F$ with values in $[0.5,1]$.  Let $(F_1)_t$ (respectively, $(F_2)_t$) be the level set of $r$ restricted to $F_1$ (resp. $F_2$) for each $t$, so that $(F_i)_1=L_i$ and $(F_i)_{0.4}$ is empty.  The mod 2 sum $s(t)$ of the linking numbers of each component in $(F_1)_t$ with each component in $(F_2)_t$ is constant with respect to $t$ since this sum does not change at a maximum or minimum and changes by an even number at a saddle point of $F$.  Thus,
$$\ds\sum_{K_i \mathrm{in} L_i}\lk(K_1,K_2)=s(1)\equiv s(0.4)=0\pmod2.$$
\end{proof}

It follows from Lemma \ref{lem:F1F2} 
that we get a homomorphism
$$l:\lc\longrightarrow\zz/2$$
by taking $$l([L])=\ds\sum_{K'\ne \vec{K}}\lk(\vec{K},K'),$$
where $\vec{K}$ is the oriented component of $L$.
The Hopf link $H$ (with one marked oriented component) satisfies $H=-H$ and $l(H)=1$, and thus generates a $\zz/2$ summand of $\lc_0$.

\begin{example}
\label{eg:HW}
Figure \ref{fig:ribbonF} shows a ribbon immersed disk disjoint union annulus bounded by the connected sum of the Hopf and Whitehead links.  However the Hopf link has mod 2 linking number $l=1$ while the Whitehead link has vanishing $l$.  It follows that their sum is nontrivial in $\lc$.  This illustrates a subtlety of the definition of $\lc$: the connected sum of the Hopf and Whitehead links is a partly oriented link which bounds a surface $F$ in the four-ball with $\chi=1$.  However it does not bound any such surface with its oriented component bounding a disk component of $F$.
\end{example}


We recall that the group $\Theta^3_\qq$ consists of smooth rational homology cobordism classes of rational homology three-spheres under connected sum.  Two rational homology three-spheres $Y_0$ and $Y_1$ are rational homology cobordant if $-Y_0\# Y_1$ bounds a rational homology four-ball, or equivalently if $-Y_0$ and $Y_1$ cobound a rational homology $S^3\times [0,1]$.

We next show that taking double branched covers yields a group homomorphism
$$\calf:\caln\to\Theta^3_\qq,$$
where $\caln$ is the subgroup of $\lc$ consisting of classes represented by links with nonzero determinant.
This is a consequence of the following proposition, which is proved in Section \ref{sec:2covers}.  A proof was given by Lisca in \cite{lisca1} for the case of ribbon embedded surfaces.

\begin{proposition}
\label{prop:2covers}
Let $L$ be a link in $S^3$ with nonzero determinant which bounds a smoothly (or topologically locally flat) properly embedded surface
$F$ in $D^4$ without closed components, and with $\chi(F)=1$.  Then the double cover of $D^4$ branched along $F$ is a smooth (or topological) rational homology four-ball.
\end{proposition}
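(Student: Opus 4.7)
The branched cover $W\to D^4$ is $2$-to-$1$ away from $F$ and $1$-to-$1$ on $F$, so $\chi(W)=2\chi(D^4)-\chi(F)=1$. Since $L$ has nonzero determinant, $\partial W=\Sigma(L)$ is a rational homology $3$-sphere with $|H_1(\partial W;\zz)|=|\det L|$. Moreover $W$ is orientable (as a cover of the orientable manifold $D^4$) and connected (since $F\ne\emptyset$ makes the $\zz/2$-cover of $D^4\setminus F$ nontrivial). The long exact sequence of the pair $(W,\partial W)$ combined with Poincar\'e-Lefschetz duality gives $b_1(W;\qq)=b_3(W;\qq)$, and substituting into $\chi(W)=1$ yields $b_2(W;\qq)=2b_1(W;\qq)$. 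So it suffices to prove $H_1(W;\qq)=0$.

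To establish this vanishing, I would choose a generic Morse function on $F$ (say the radial distance on $D^4$), presenting $F$ as a cobordism in $S^3\times I$ from $\emptyset$ to $L$ with $b$ births, $s$ saddles, and $m$ maxima, subject to $b-s+m=\chi(F)=1$. Each elementary cobordism lifts to a handle attachment on $W$: the first birth contributes a $0$-handle, subsequent births contribute $1$-handles, saddles contribute $2$-handles, and maxima contribute $3$-handles. In the ribbon case $(m=0)$ this recovers Lisca's situation in \cite{lisca1}: the $s\times s$ integer matrix of $2$-handle attaching data is invertible over $\qq$ (using the hypothesis $\det L\ne 0$), so $H_1(W;\qq)=0$.

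The principal obstacle is the general case with $m>0$, in which the $2$-handle attaching matrix $A$ is non-square and the $3$-handle attaching matrix $B$ also enters the computation. By the duality $b_1(W;\qq)=b_3(W;\qq)$ noted above, showing $H_1(W;\qq)=0$ is equivalent to showing $B$ is injective over $\qq$; I would establish this by analyzing the local branched-cover geometry near each maximum of $F$. The topological version of the proposition follows by the same argument using topological handle decompositions of $W$, available for locally flat embedded $F$ by the work of Freedman-Quinn.
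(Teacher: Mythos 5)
Your reductions are correct as far as they go: $\chi(W)=2\chi(D^4)-\chi(F)=1$, Poincar\'e--Lefschetz duality together with $\partial W$ being a rational homology sphere gives $b_1(W)=b_3(W)$ and hence $b_2(W)=2b_1(W)$, the handle count (one $0$-handle, $b-1$ one-handles, $s$ two-handles, $m$ three-handles) is right, and the ribbon case $m=0$ is indeed Lisca's. But the general case $m>0$ is exactly where the content of the proposition lies, and your proposal does not contain a proof of it. By the duality you have already invoked, injectivity of $B$ over $\qq$ is \emph{equivalent} to $b_3(W)=0$, i.e.\ to $b_1(W)=0$, which is the statement being proved; you have restated the problem, not reduced it. Moreover this cannot be settled by ``analyzing the local branched-cover geometry near each maximum'': near every maximum the picture is a standard cap in a small $4$-ball and its lift is a standard $3$-handle, so all local pictures are identical, while whether the attaching spheres are independent in $H_2$ of the part of $W$ already built is a global question. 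Indeed, if $F$ were allowed a closed spherical component the local data would be unchanged but $B$ would fail to be injective; so any correct argument must use the hypotheses that $F$ has no closed components and $\det L\ne0$ at precisely this step, and your sketch uses neither in the case $m>0$. A secondary gap is the locally flat case: Freedman--Quinn provides handle decompositions of the topological $4$-manifold $W$, but your argument requires a decomposition \emph{adapted to the branched covering}, i.e.\ a presentation of the locally flat surface by births, saddles and deaths, which is a Morse-theoretic notion not available for general locally flat surfaces.

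For comparison, the paper's proof (Proposition \ref{prop:2covers2}) avoids handle decompositions entirely and treats the smooth and locally flat cases, and arbitrary $\chi(F)$, uniformly. Working with $\zz/2$ coefficients, it first shows $H^1(D^4\setminus F,S^3\setminus L)=0$ by combining Mayer--Vietoris for the decomposition of $(D^4,S^3)$ with the relative Gysin sequence of the circle bundle $\partial\nu F\to F$; this is where ``no closed components'' enters, via $H^0(F,\partial F)=0$. The relative Gysin sequence of the line bundle classifying the double cover then sandwiches $H^1(\widetilde{D^4\setminus F},\widetilde{S^3\setminus L})$ between two copies of this vanishing group, and a final Mayer--Vietoris yields $H^1(N,\partial N;\zz/2)=0$, whence $b_1=b_3=0$ and $b_2=1-\chi(F)$ by the Euler characteristic count (with $\det L\ne0$ used to make $\partial N$ a rational homology sphere). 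If you want to salvage a Morse-theoretic proof you would essentially have to reprove this cohomological vanishing in handle-theoretic terms; I recommend adopting the Gysin-sequence argument instead.
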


One consequence of Proposition \ref{prop:2covers} is that the determinant of any $\chi$-nullconcordant link is a square.

It remains to be seen that $\lc_0$ contains a $\zz^\infty\oplus(\zz/2)^\infty$ subgroup.  

\begin{proposition}
\label{prop:2torsgroup}
The two-bridge links $\{S(q^2+1,q)\,|\,q\text{ odd}\}$ generate a $(\zz/2)^\infty$ subgroup of $\lc_0$.
\end{proposition}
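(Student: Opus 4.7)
Write $L_q:=S(q^2+1,q)$. Since $q$ is odd, $p:=q^2+1$ is even, so $L_q$ is a two-component two-bridge link and each component is an unknot; marking and orienting either one exhibits $L_q$ as a partly oriented link with $[L_q]\in\lc_0$ (the marked component is slice) and $[L_q]\in\caln$ (since $\det L_q=p\ne0$). The double branched cover homomorphism $\calf$ of Theorem \ref{thm:invts} sends $[L_q]$ to the rational cobordism class of the lens space $L(q^2+1,q)$ in $\Theta^3_\qq$.

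The first step in the plan is to show each $L_q$ is 2-torsion. By Schubert's classification of two-bridge links, $S(p,q_1)\cong S(p,q_2)$ as unoriented links precisely when $q_1q_2\equiv\pm1\pmod p$, while the mirror image of $S(p,q)$ is $S(p,-q)$. For $p=q^2+1$ the identity $q^2\equiv-1\pmod p$ gives $-q\equiv q^{-1}\pmod p$, hence $S(p,-q)\cong S(p,q)$, i.e.\ $L_q$ is amphichiral. Since both components are unknots, their orientations may be reversed by an ambient isotopy, and the two unknots may be interchanged by a symmetry of $L_q$; these two flexibilities upgrade the amphichirality to an identification $L_q=-L_q$ of partly oriented links, so that $2[L_q]=0$ in $\lc_0$.

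For linear independence, suppose $\sum_{i=1}^n[L_{q_i}]=0$ in $\lc_0$ for some distinct odd positive integers $q_1,\ldots,q_n$. Applying $\calf$ then gives that $\#_{i=1}^n L(q_i^2+1,q_i)$ bounds a smooth rational homology four-ball. I would invoke Lisca's complete classification of sums of lens spaces bounding rational homology balls \cite{lisca1,lisca2}: such a connected sum can bound a rational ball only if its multiset of summands is reducible to the empty multiset using (a) lens spaces that individually bound rational balls, together with (b) ``cancellation pairs'' which in particular require equal values of $p=|H_1|$. Neither applies here: no $L(q^2+1,q)$ bounds a rational ball, since $q^2+1$ is never a square for $q\ge1$ and hence the linking form on $\zz/(q^2+1)$ is not hyperbolic; and distinct odd $q_i$ give distinct orders $q_i^2+1$, so no cancellation pair is available. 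Thus $n=0$, and the classes $[L_q]$ are linearly independent over $\zz/2$.

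The main obstacle is the independence step, which requires extracting from Lisca's classification the statement that no nontrivial sub-sum of the $[L(q^2+1,q)]$'s vanishes in $\Theta^3_\qq$. Should a direct appeal to \cite{lisca1,lisca2} prove awkward, a back-up plan is to compute enough Ozsv\'ath--Szab\'o correction terms $d(L(q^2+1,q),\mathfrak{s})$---in the spirit of the $\delta$-invariant of Theorem \ref{thm:invts}---to obstruct any nontrivial combination from bounding a rational homology ball directly, possibly after first thinning the sequence of $q$'s to a subsequence whose $d$-invariant spectra are distinguishable.
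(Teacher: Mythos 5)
Your overall architecture matches the paper's: put $[L_q]$ in $\lc_0$ via unknotted components, show $2[L_q]=0$ via $L_q=-L_q$, then rule out relations. The first two steps are essentially the paper's (it simply asserts $L=-L$; your Schubert-classification justification is fine, though ``both components are unknots'' is not by itself a reason that orientations can be reversed by an ambient isotopy --- what you actually want is that two-bridge links are invertible and admit a symmetry exchanging the two components). Where you diverge is the independence step, and there the proposal has a genuine gap. The paper explicitly declines the route you take: ``We could appeal to Lisca's results \cite{lisca2} \dots but there is an easier argument using determinants.'' Lisca's classification is not accurately captured by ``individual bounders plus cancellation pairs with equal $|H_1|$'': his criterion is membership of each $p_i/q_i$ in explicit families $\calr$ and $\calf_n$ together with a matching condition, and to invoke it you would have to verify that $\frac{q^2+1}{q}$ lies in none of these families (compare the proof of Proposition \ref{prop:lens}, which performs exactly this check for $L(2k,1)$). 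As written, the decisive step rests on a paraphrase that is not the theorem, and your back-up plan via correction terms is only a plan.

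The paper's actual argument is worth absorbing because it is self-contained and much softer. Proposition \ref{prop:2covers} implies that a $\chi$-nullconcordant link has square determinant. Given any finite subfamily $\{S(q_i^2+1,q_i)\}$, choose a prime $p\equiv 1\pmod 4$ dividing no $q_i^2+1$; since $-1$ is a quadratic residue mod $p$ there is an odd $q<p$ with $p\mid q^2+1$ and $p^2\nmid q^2+1$, so the determinant of $S(q^2+1,q)$ summed with any combination of the $S(q_i^2+1,q_i)$ has odd $p$-adic valuation and is not a square. Hence the subgroup generated by the whole family is not finitely generated, and being elementary abelian of exponent two it contains (indeed is) a $(\zz/2)^\infty$. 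Note this proves only infinite generation, not linear independence of the specific links --- and determinants alone cannot prove the latter, since for instance $\det\bigl(S(2,1)\#S(50,7)\bigr)=100$ is a square. Your stronger independence claim genuinely requires the heavier input you gesture at, which is one more reason to prefer the paper's weaker but sufficient statement.
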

\begin{proof}
Each partly oriented link $L=S(q^2+1,q)$ for $q$ odd satisfies $L=-L$ and therefore has order one or two in $\lc$; since $q^2+1$ is not a square the order is two, by Proposition \ref{prop:2covers}.  The components of a two-bridge link are one-bridge and hence unknots, thus a two-bridge link represents an element of $\lc_0$.  We could appeal to Lisca's results \cite{lisca2} to see that there are no other relations among these links but there is an easier argument using determinants.

We will show that the subgroup of $\lc_0$ generated by $\{S(q^2+1,q)\,|\,q\text{ odd}\}$ is infinitely generated and hence is isomorphic to $(\zz/2)^\infty$.  Suppose we have some finite subset
$\{S(q_i^2+1,q_i)\}$.  Choose a prime $p$ congruent to 1 modulo 4 which does not divide $q_i^2+1$ for each $i$.  Then there exists an odd positive  $q<p$ with $q^2+1$ divisible by $p$ but not by $p^2$.  It follows, again using Proposition \ref{prop:2covers}, that $S(q^2+1,q)$ is not in the subgroup of $\lc_0$ generated by $\{S(q_i^2+1,q_i)\}$.
\end{proof}

\begin{proposition}{(Corollary of \cite[Theorem 1.1]{lisca2})}
\label{prop:lens}
The subgroup of the rational homology cobordism group of rational homology 3-spheres $\Theta^3_\qq$ generated by lens spaces is infinitely generated.
In particular the set
$$\{L(2k,1)\,|\,k>2\}$$
is independent in $\Theta^3_\qq$.
\end{proposition}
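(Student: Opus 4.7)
The plan is to deduce the independence of $\{[L(2k,1)]\,|\,k>2\}$ in $\Theta^3_\qq$ from Lisca's classification \cite[Theorem 1.1]{lisca2} of those connected sums of lens spaces which smoothly bound rational homology four-balls. A finite integer relation
$$\sum_{k\in S}n_k[L(2k,1)]=0\quad\text{in }\Theta^3_\qq,$$
with $S\subset\{k\in\zz\,|\,k>2\}$ finite and $n_k\in\zz$, is equivalent to the assertion that the oriented connected sum
$$Y\;=\;\#_{k\in S}\bigl(n_k\cdot L(2k,1)\bigr),$$
in which $n_k<0$ is interpreted as $|n_k|$ copies of $-L(2k,1)=L(2k,2k-1)$, smoothly bounds a rational homology four-ball. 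It therefore suffices to show that Lisca's theorem admits such a configuration only when every $n_k$ is zero.

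Under the Hirzebruch-Jung continued fraction encoding used by Lisca, each summand $L(2k,1)$ is represented by the length-one string $(2k)$, while each summand $L(2k,2k-1)$ is represented by the length-$(2k-1)$ all-twos string $(2,2,\dots,2)$. Lisca's theorem describes, via an explicit combinatorial matching/reduction rule on such strings, the complete list of multisets whose associated connected sum of lens spaces bounds a rational homology ball. The next step is to inspect each family on this list against a multiset built only from the two special kinds of strings above with $k>2$, and verify that the only admissible such multiset is empty. The hypothesis $k>2$ excludes the exceptional cases $L(2,1)$ and $L(4,1)$ (and their orientation reverses) where short strings can combine in unexpected ways.

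The main obstacle is executing this case check cleanly against Lisca's full list of admissible configurations, which is somewhat intricate; however, the rigidity of length-one strings $(2k)$ and of odd-length all-twos strings makes each potential match collapse immediately outside the trivial case. Once the check is complete, the classes $[L(2k,1)]$ with $k>2$ are $\zz$-linearly independent in $\Theta^3_\qq$, so they span a $\zz^\infty$ subgroup of the subgroup of $\Theta^3_\qq$ generated by all lens spaces, which is thereby infinitely generated.
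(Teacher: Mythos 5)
Your proposal is correct and follows essentially the same route as the paper: both reduce a putative relation $\sum n_k[L(2k,1)]=0$ to the statement that a connected sum of lens spaces (with negative multiples realised as $L(2k,2k-1)=-L(2k,1)$) bounds a rational homology ball, and then invoke Lisca's classification \cite[Theorem 1.1]{lisca2}, checking that $L(2k,1)$ for $k>2$ cannot occur in any admissible configuration (the families $\mathcal{R}$, $\mathcal{F}_n$, or the paired sums $L(p,q)\#L(p,p-q)$, the last being excluded since each $k$ contributes only one orientation). The paper's proof is exactly this one-line citation, so the only remaining work in either version is the routine verification against Lisca's list, which you correctly identify but leave unexecuted.
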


\begin{proof}
This follows from \cite[Theorem 1.1]{lisca2} since for $k>2$, $L(2k,1)$ is not contained in any of Lisca's families $\calr$ or $\calf_n$.
\end{proof}

The lens space $L(2p,q)$ is the double branched cover of the two-bridge link $S(2p,q)$ each of whose components is an unknot.
Combining Propositions \ref{prop:2covers} and \ref{prop:lens} we see that the two-bridge links
$$\{S(2k,1)\,|\,k>2\}$$
generate a $\zz^\infty$ subgroup of $\lc_0$.  (An argument with determinants can be used to show the subgroup these links generate is not finitely generated, without appealing to \cite{lisca2}.) This completes the proof of Theorem \ref{thm:mainthm}.


\section{Using smooth oriented surfaces}
\label{sec:or}

In this section we prove Theorem \ref{thm:or}, and complete the proof of Theorem \ref{thm:invts}.

We use the term \emph{marked oriented link} for an oriented link in $S^3$ with one marked component.  The marked components are used when taking connected sums.
The reverse mirror of $L$, preserving the marked component, is denoted $-L$.
Marked oriented links $L_0$ and $L_1$ are $\chi$-concordant if $-L_0\#L_1$ bounds an oriented smoothly properly embedded disjoint union of a disk with annuli in $D^4$,
with the marked component bounding the disk.  Modifying Lemma \ref{lem:cyl}, this is equivalent to $L_0\!^r\times\{0\}$ and $L_1\times\{1\}$ being the oriented boundary of a disjoint union $F_0$ of properly embedded annuli in $S^3\times[0,1]$, with one component of $F_0$ connecting the marked components.  It follows that $L_0$ and $L_1$ have the same number of components modulo two.

 Lemmas \ref{lem:equiv} and \ref{lem:groupsum} can be restated for the case of marked oriented links and the same proofs apply.  The group of $\chi$-concordance classes of marked oriented links
is denoted $\tlc$.  Comparing the definitions  we see that if marked oriented links $L$ and $L'$ represent the same class in $\tlc$ then the embedded surface giving rise to the $\chi$-concordance also gives rise to a $\chi$-concordance between the partly oriented links obtained from $L$ and $L'$ by forgetting orientations on nonmarked components.  This forgetful map also commutes with connected sum and so gives rise to a surjection from $\tlc$ to $\lc$.

\begin{lemma}
\label{lem:Zlinking}
Let $L$ be a $\chi$-nullconcordant marked oriented link with marked component $K$.  Then $\ds\sum\lk(K,K')=0$, where the sum is taken over all components $K'\ne K$ of $L$.
\end{lemma}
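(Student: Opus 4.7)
The plan is to upgrade Lemma~\ref{lem:F1F2} from a mod-$2$ statement to an integer statement, using the orientation on the bounding surface that comes for free in the marked oriented setting. By the definition of $\chi$-concordance, $L$ bounds a smoothly properly embedded oriented surface $F = D \sqcup A \subset D^4$, where $D$ is an oriented disk with $\partial D = K$ and $A$ is an oriented disjoint union of annuli with oriented boundary equal to the sublink $L \setminus K$ of unmarked components. Hence
$$\sum_{K' \neq K} \lk(K, K') = \lk(K, \partial A),$$
and it suffices to show this single linking number vanishes.

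The cleanest route uses the standard identity that for disjoint oriented closed $1$-manifolds $\gamma_1, \gamma_2 \subset S^3 = \partial D^4$ bounded by transverse properly embedded oriented surfaces $\Sigma_1, \Sigma_2 \subset D^4$, the linking number $\lk(\gamma_1, \gamma_2)$ agrees up to a fixed sign with the algebraic intersection number $\Sigma_1 \cdot \Sigma_2$. Applied with $\Sigma_1 = D$ and $\Sigma_2 = A$, the disjointness $D \cap A = \emptyset$ immediately yields $\lk(K, \partial A) = 0$.

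Alternatively, one may rerun the Morse-theoretic argument of Lemma~\ref{lem:F1F2} with signed in place of mod-$2$ linking. Put the radial function on $F$ in Morse position and define $s(t) := \lk_{S^3_t}(D_t, A_t)$ on the radius-$t$ sphere. The step in Lemma~\ref{lem:F1F2} that genuinely required reducing mod $2$ was the behaviour at saddles, where an individual linking number could jump by $\pm 2$; here, because $F$ is oriented, a saddle either merges two coherently oriented circles into one or splits one into two with consistent orientations, so additivity of signed linking keeps $s(t)$ unchanged across every critical point. Since $s(t) = 0$ for $t$ below the minimum of $r|_F$ and $s(1) = \lk(K, \partial A)$, the result follows. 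The only technical point in either approach is checking that the orientation on $F$ induces coherent orientations on the level-set circles through saddles, which is immediate once $F$ is fixed as an oriented submanifold.
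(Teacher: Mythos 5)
Your proposal is correct. Your second, Morse-theoretic argument is essentially the paper's own proof: the paper proves the lemma by rerunning Lemma \ref{lem:F1F2} with $F_1$ the disk bounded by $K$ and observing that, in the oriented case, the total signed linking number of the level sets is unchanged at every critical point of $r|_F$ — exactly the point you make about saddles of an oriented surface preserving the homology class of the level set in the complement of the other piece. Your first argument, identifying $\sum_{K'\ne K}\lk(K,K')$ with the algebraic intersection number $D\cdot A$ of the disjoint oriented pieces $D$ and $A$ of $F$ in $D^4$, is a genuinely different and somewhat slicker route: it replaces the level-set bookkeeping with the standard fact that for disjoint oriented properly embedded surfaces in $D^4$ the intersection pairing computes the linking number of the boundaries (well defined rel boundary since $H_2(D^4)=0$), so disjointness gives the vanishing immediately. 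Both are sound; the intersection-number version generalises most cleanly (e.g.\ to other fillings of $S^3$ with $b_2=0$), while the Morse-theoretic version makes transparent exactly where orientability is used and why only a mod $2$ statement survives in the partly oriented setting of Lemma \ref{lem:F1F2}.
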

\begin{proof}
This follows from a modification of the proof of Lemma \ref{lem:F1F2}, taking $F$ to be a surface in $D^4$ witnessing the $\chi$-nullconcordance, with $F_1$ the disk component bounded by $K$.  In the oriented case the sum of linking numbers between the level set of $F_1$ and that of $F_2$ does not change at any critical point of $r|_F$.
\end{proof}

It follows from Lemma \ref{lem:Zlinking} that the total linking number with the marked component gives a homomorphism
$$\tilde{l}:\tlc\to\zz$$
which is a lift of $l:\lc\to\zz/2$.  A homomorphism $\mu$ to $\zz/2$ is given by taking one plus the number of components of a link modulo two.

The marked oriented (positive) Hopf link $H$ has $l=\mu=1$ and the marked oriented two component unlink $U$ has $l=0$, $\mu=1$ and order two in $\tlc$.   Thus these two links generate a $\zz\oplus\zz/2$ summand of $\tlc_0$.

Let $\omega\in S^1\setminus\{1\}$ be a prime-power root of unity.  The Levine-Tristram signature $\sigma_\omega$ and nullity $n_\omega$ are
defined to be the signature and nullity of $(1-\bar\omega)M+(1-\omega)M^T$ where $M$ is a Seifert matrix for $L$.
It follows that both of these invariants are additive under connected sum of marked oriented links.  The nullity is invariant and the signature changes sign under $L\mapsto -L$.
We let $\tcaln_\omega$ be the subgroup consisting of elements with a representative with
zero Levine-Tristram nullity $n_\omega$ (or equivalently, with $\omega$ not a root of the one-variable Alexander polynomial).

\begin{lemma}
\label{lem:sig}
Let $L$ be an oriented link with $n_\omega(L)=0$.  If $L$ is $\chi$-slice then $\sigma_\omega(L)=0$.  It follows that the Levine-Tristram signature gives
a homomorphism
$$\sigma_\omega:\tcaln_\omega\to\zz.$$
\end{lemma}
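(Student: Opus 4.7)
The lemma is a Tristram--Levine-type signature bound for links bounding $\chi=1$ oriented surfaces (in the spirit of Florens \cite{florens2}), and the plan is to prove it directly by adapting the classical metabolic argument of Murasugi--Tristram--Levine. Suppose $L$ bounds a compatibly oriented surface $F\subset D^4$ with $\chi(F)=1$ and no closed components. Writing $c(L)$, $c(F)$, $g(F)$ for the numbers of components of $L$ and $F$ and the total genus of $F$, the identity $1=2c(F)-2g(F)-c(L)$ forces $c(L)$ to be odd, say $c(L)=2N+1$ with $N=c(F)-g(F)-1$.

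First I would choose a connected Seifert surface $\Sigma$ for $L$, stabilising if necessary so that $g(\Sigma)\geq g(F)$; this does not alter the $S$-equivalence class of the Seifert matrix, and hence affects neither $\sigma_\omega$ nor $n_\omega$. Then $H_1(\Sigma;\cc)$ has even complex dimension $2g(\Sigma)+2N$. Setting $W=D^4\setminus\nu(F)$ and pushing $\Sigma$ slightly into $D^4$, I would study the kernel $\mathcal{K}$ of the inclusion-induced map $\iota_*:H_1(\Sigma;\cc)\to H_1(W;\cc)$. An Alexander--Lefschetz computation identifies $H_1(W;\cc)\cong\cc^{c(F)}$ with basis the meridians $\mu_j$ of the components $F_j$ of $F$, and under $\iota_*$ a class $[\gamma]$ maps to $\sum_j\lk(\gamma,\partial F_j)\mu_j$. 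Because $\gamma\subset\Sigma$ and $L=\bigsqcup_j\partial F_j$, the total linking number $\sum_j\lk(\gamma,\partial F_j)=\lk(\gamma,L)$ vanishes, so the image of $\iota_*$ lies in a codimension-one subspace and
\[
\dim\mathcal{K}\;\geq\;2g(\Sigma)+2N-(c(F)-1)\;\geq\;g(\Sigma)+N,
\]
i.e.\ $\mathcal{K}$ has at least half the dimension of $H_1(\Sigma;\cc)$.

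The main obstacle, and the heart of the argument, is to verify that the Tristram--Levine Hermitian form $V_\omega=(1-\bar\omega)V+(1-\omega)V^T$ vanishes identically on $\mathcal{K}$. Given $\gamma,\gamma'\in\mathcal{K}$, one writes $\gamma=\partial C$ and $\gamma'=\partial C'$ for 2-chains in $W$ and interprets the Seifert pairing as an intersection number in $D^4$ between $C$ and a positive push-off of $C'$; the classical Levine argument must then be promoted to the $\omega$-twisted setting, most naturally by passing to the free abelian (or, for a prime-power root of unity, an appropriate cyclic) cover of $S^3\setminus L$ that extends over $W$. Extra care is needed since $H_2(W;\cc)$ is nontrivial, of dimension $c(F)-1$, but the same linking-sum constraint used above shows that these extra classes pair trivially with cycles coming from $\Sigma$. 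Granted the isotropy of $\mathcal{K}$ and the dimension bound, the nondegeneracy of $V_\omega$ supplied by $n_\omega(L)=0$ forces $\sigma_\omega(L)=0$. The final claim then follows from the additivity of both $\sigma_\omega$ and $n_\omega$ under connected sum of oriented links together with $\sigma_\omega(-L)=-\sigma_\omega(L)$, which together guarantee that $\sigma_\omega$ descends to a well-defined homomorphism $\tcaln_\omega\to\zz$.
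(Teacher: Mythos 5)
The paper's own proof of this lemma is a one-line appeal to the Murasugi--Tristram inequality \cite{tristram,kt,gilmer,florens2,cimasoniflorens}; your proposal instead tries to reprove that inequality from scratch, and it stalls at exactly the step you yourself flag as ``the heart of the argument''. The preliminaries are fine: the parity of $c(L)$, the identification $H_1(D^4\setminus F;\cc)\cong\cc^{c(F)}$ via meridians, and the observation that a curve $\gamma$ in the interior of a Seifert surface $\Sigma$ has total linking number zero with $L$ (its positive push-off is disjoint from $\Sigma$ and homologous to $\gamma$ in $S^3\setminus L$), so the image of $\iota_*$ does lie in the hyperplane $\sum_j a_j=0$ and $\dim\mathcal{K}\ge g(\Sigma)+N$. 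What is missing is any actual argument that $V_\omega$ vanishes on $\mathcal{K}$, and this is not a routine ``promotion'' of Levine's argument: that content \emph{is} the Murasugi--Tristram inequality. Levine's push-off argument, where it applies, shows that the Seifert form $V$ itself vanishes on the kernel of $H_1(\Sigma)\to H_1(R)$ for a Seifert \emph{three-manifold} $R\subset D^4$ bounded by the closed surface $\Sigma\cup_L F$; that kernel is not your $\mathcal{K}$, and when $F$ is disconnected or has positive genus --- both allowed by $\chi(F)=1$, which only forces $c(F)\ge 2g(F)+1$ --- the half-lives-half-dies count on $\Sigma\cup F$ does not hand you a half-dimensional subspace of $H_1(\Sigma)$ on which $V$ vanishes. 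The alternative you gesture at, passing to cyclic covers, replaces $\mathcal{K}$ by the kernel of a map into \emph{twisted} homology, where your untwisted dimension count no longer applies and where the prime-power hypothesis on $\omega$ (imposed by the paper, but treated as parenthetical in your write-up) is genuinely needed to control the homology of the cover. As written, the proposal conflates these two mechanisms and completes neither, so there is a real gap.

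Two smaller points. First, you should spell out the endgame: an isotropic subspace of a nondegenerate Hermitian form has dimension at most half, so your bound forces $\dim\mathcal{K}$ to equal half of $\dim H_1(\Sigma;\cc)$ and the signature to vanish --- easy, but it is the punchline. Second, the efficient fix is the one the paper uses: quote the Murasugi--Tristram inequality in the form $|\sigma_\omega(L)|+|n_\omega(L)-c(F)+1|\le b_1(F)=c(F)-\chi(F)$, which for $\chi(F)=1$ and $n_\omega(L)=0$ immediately yields $\sigma_\omega(L)=0$; the passage to the homomorphism on $\tcaln_\omega$ via additivity of $\sigma_\omega$ and $n_\omega$ and the sign change under $L\mapsto -L$ is then exactly as you say.
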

\begin{proof}
The vanishing of the Levine-Tristram signature for a $\chi$-slice link with $n_\omega(L)=0$ follows directly from the Murasugi-Tristram inequality, see \cite[Theorem 2.27]{tristram}, also \cite{kt,gilmer,florens2,cimasoniflorens}.
\end{proof}

In \cite[\S 2.1]{turaev}, Turaev constructed a bijection from the set of quasiorientations (orientations up to overall reversal) on a link $L$ in $S^3$ to the set of spin structures on the double-branched cover $\Sigma_2(S^3,L)$.  In the following Proposition, the proof of which closely follows \cite{turaev}, we extend this map to the case of an orientable surface in the four-ball.

\begin{proposition}
\label{prop:spin}
Let $F$ be an oriented smoothly 
properly embedded surface in $D^4$ and let $N$ be the double cover of $D^4$ branched along $F$.  
There is a natural bijective correspondence between quasiorientations of $F$ and spin structures on $N$.
The spin structure on $\partial N$ determined by the induced orientation on the link
$L=\partial F\subset S^3$ admits an extension over $N$, which is unique if $F$ has no closed components.
\end{proposition}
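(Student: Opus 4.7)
The plan is to adapt Turaev's three-dimensional argument \cite[\S 2.1]{turaev} to the four-dimensional branched cover. The key observation is that the restriction $p\colon N\setminus\tilde F\to D^4\setminus F$ is an unbranched double cover, so the unique spin structure on $D^4$ pulls back to a spin structure on $N\setminus\tilde F$; it remains to extend this across the branch locus $\tilde F$. In a tubular neighborhood of $\tilde F$, $N$ is locally modeled by the branched cover $(x,z)\mapsto(x,z^2)$, where $x\in F$ and $z$ is a normal coordinate. An orientation $\omega$ on $F$, together with the ambient orientation of $D^4$, orients $\nu(F\subset D^4)$; since $[F]=0$ in $H_2(D^4)$, this oriented plane bundle has vanishing Euler class and is trivial. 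I would use a compatible trivialization to obtain a stable framing of the tangent bundle along $\tilde F$, producing a spin structure on the tubular neighborhood which glues to the pulled-back spin structure on the complement, and hence a well-defined spin structure $\mathfrak s(\omega)$ on $N$.

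I would next check that $\omega\mapsto\mathfrak s(\omega)$ descends to quasi-orientations (since simultaneous reversal of all component orientations is absorbed by the deck involution) and is bijective. For bijectivity, reversing the orientation of a single component of $F$ is seen to alter $\mathfrak s(\omega)$ by the $\zz/2$-cohomology class dual to the lift of a meridian around that component; a transfer-sequence computation for the branched double cover then shows $|H^1(N;\zz/2)|=2^{c-1}$, where $c$ is the number of components of $F$, matching the $2^{c-1}$ quasi-orientations. By naturality of the local construction near $\partial F=L$, the restriction of $\mathfrak s(\omega)$ to $\partial N=\Sigma_2(S^3,L)$ coincides with Turaev's spin structure associated to the induced orientation of $L$, which establishes existence of the extension. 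For uniqueness when $F$ has no closed components, any two extensions of a given boundary spin structure differ by a class in $\ker(H^1(N;\zz/2)\to H^1(\partial N;\zz/2))$, which via the long exact sequence of the pair and Poincar\'e--Lefschetz duality is isomorphic to $H_3(N;\zz/2)$; this group vanishes because $F$ is then homotopy equivalent to a graph and $N$ admits a relative handle decomposition on $\partial N\times I$ using only handles of index at most $2$.

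The main obstacle I anticipate is the local verification in the first step: confirming that the spin structure defined via the normal-bundle trivialization near $\tilde F$ agrees with the pulled-back spin structure on the punctured neighborhood. This is the four-dimensional analog of Turaev's careful monodromy calculation, and requires tracking a square root of the rotation on the normal two-plane exactly as in his three-dimensional argument. Once this matching is established, the remaining counting, naturality and uniqueness arguments are essentially formal.
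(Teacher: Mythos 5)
Your overall strategy (pull back the spin structure from $D^4$, analyse the behaviour near the branch locus, then count using $|H^1(N;\zz/2)|=2^{c-1}$) is the same as the paper's, which follows Turaev. But the central step fails as stated: the pulled-back spin structure on $N\setminus\tilde F$ does \emph{not} glue to any spin structure on a tubular neighbourhood of $\tilde F$, no matter which trivialization of the normal bundle you choose. Concretely, the unique spin structure on $D^4$ restricts to the bounding spin structure on a meridian $\mu_i$ of $F_i$ (since $\mu_i$ bounds a meridional disk in $D^4$); its pullback under the degree-two covering $\tilde\mu_i\to\mu_i$ is the non-bounding (Lie) spin structure on the lifted meridian $\tilde\mu_i$. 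Any spin structure on the disk bundle $\nu\tilde F_i$ restricts to the bounding spin structure on each fibre circle, so the two structures disagree on every $\tilde\mu_i$ and the gluing obstruction is nontrivial. This is precisely the point of Turaev's construction: one must first twist the pullback by the class $h=\tfrac{1}{2}(\gamma\circ\pi_*)\bmod 2\in H^1(N\setminus\tilde F;\zz/2)$, where $\gamma$ sends each oriented meridian to $1$; since $h(\tilde\mu_i)=1$, the twisted structure restricts to the bounding spin structure on each $\tilde\mu_i$ and does extend over $N$. The dependence on the quasiorientation of $F$ enters through the signs in $\gamma$, not through a framing of the normal bundle. So the ``main obstacle'' you flagged is not a verification to be carried out but an actual failure of the proposed construction.

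A secondary problem is the uniqueness step. You reduce it to $H_3(N;\zz/2)\cong H^1(N,\partial N;\zz/2)=0$ and justify this by asserting a handle decomposition with handles of index at most $2$; such a decomposition exists for ribbon surfaces but not for arbitrary smoothly properly embedded surfaces (local maxima of the radial function on $F$ contribute $3$-handles to $N$, and they cannot in general be removed by isotopy), so this needs a different argument --- for instance the Gysin/Mayer--Vietoris computation used in Section 5 of the paper, which does show $H^1(N,\partial N;\zz/2)=0$ when $F$ has no closed components. The paper instead obtains uniqueness for free from the correspondence itself: spin structures on $N$ biject with quasiorientations of $F$, their restrictions to $\partial N$ are Turaev's spin structures for the restricted quasiorientations of $L$, and when $F$ has no closed components the restriction map on quasiorientations is injective.
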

\begin{proof}

Let $F_1,\dots,F_m$ be the components of $F$ and let $\mu_i$ be an oriented meridian of $F_i$.  The first homology of $D^4\setminus F$ is freely generated over $\zz$ by the meridians.
Let $\gamma:H_1(D^4\setminus F)\to\zz$ be the homomorphism taking each meridian to $1$.  Denoting by $\tilde{F}$ the preimage of $F$ in $N$, the double covering
$$\pi:N\setminus\tilde{F}\to D^4\setminus F$$
corresponds to the mod 2 reduction of $\gamma$.  Hence a loop $l$ in $D^4\setminus F$ lifts to $N\setminus \tilde{F}$ if and only if $\gamma([l])$ is even.
Thus
\begin{equation}
\label{eqn:h}
h=\dfrac{\gamma\circ\pi_*}{2}\pmod 2
\end{equation}
is an element of $H^1(N\setminus\tilde{F};\zz/2)\cong \mathrm{Hom}(H_1(N\setminus\tilde{F}),\zz/2)$.

Choose a meridional disk $D_i$ for each surface component $F_i$, with $\partial D_i=\mu_i$.  Let $\tilde{D}_i$ and $\tilde{\mu}_i$ be the preimages of $D_i$ and $\mu_i$ in $N$.
A spin structure $\spincs$ on $N\setminus\tilde{F}$ extends uniquely over $N$ if and only if its restriction to $\tilde{\mu}_i$ extends over $\tilde{D}_i$ for each $i$.

The frame bundle $\fr(S^1)$ of $S^1$ is a copy of $S^1$.   There are two spin structures on the circle corresponding to the two double covers of $\fr(S^1)$.  The unique spin structure on $D^2$ restricts to the nontrivial spin structure on $S^1$.  The pullback of this nontrivial spin structure to the nontrivial double cover of $S^1$ is trivial.

Let $\tilde{\spincs}$ be the spin structure on $N\setminus\tilde{F}$ pulled back from $D^4\setminus F$.  The spin structure on $D^4$ restricts to the nontrivial spin structure on each $\mu_i$, which pulls back to the trivial spin structure on $\tilde{\mu}_i$.  Thus $\tilde{\spincs}$ does not extend over $\tilde{D}_i$.  However since $h(\tilde{\mu}_i)=1$ for each $i$,
 the spin structure $\tilde{\spincs}+h$ extends over $N$.  The proof that this gives a bijection between quasiorientations of $F$ and $\spin(N)$ follows \cite{turaev}: given two components $F_i$ and $F_j$ of $F$, changing orientation on just one of them will change the value of $h$ on $\tilde{\mu}_i+\tilde{\mu}_j$.  This proves injectivity, and surjectivity follows since the order of $H^1(N;\zz/2)$ is $2^{m-1}$ (see for example \cite[Theorem 1]{lw}).

The spin structure on $Y=\Sigma_2(S^3,L)$ described by Turaev in \cite{turaev} is defined in exactly the same way: $\gamma_L:H_1(S^3\setminus L)\to\zz$ takes each oriented meridian to $1$
and this defines $h_L\in H^1(Y \setminus \tilde{L};\zz/2)$ as in \eqref{eqn:h}.  The spin structure pulled back from $S^3$, twisted by $h_L$, extends uniquely over $Y$.
It is clear that this is the restriction of $\tilde{\spincs}+h$.  The uniqueness of the extension in the case that $F$ has no closed components follows since restriction of quasiorientations from $F$  
 to $\partial F$ is injective.
\end{proof}

Recall that the group $\Theta^3_{\qq,\spin}$ consists of smooth spin rational homology cobordism classes of spin rational homology three-spheres under connected sum.  Two spin rational homology three-spheres $Y_0$ and $Y_1$ are spin rational homology cobordant if $-Y_0\# Y_1$ bounds a spin rational homology four-ball, or equivalently if $-Y_0$ and $Y_1$ cobound a spin rational homology $S^3\times [0,1]$.

Given a marked oriented link $L$ with nonzero determinant, let $\spincs_L$ denote the spin structure on $\Sigma_2(S^3,L)$ determined by the orientation of $L$ as in the proof of Proposition \ref{prop:spin}.
It is not hard to see that $\spincs_{L\# L'}=\spincs_L\#\spincs_{L'}$.  If marked oriented links $L$ and $L'$ with nonzero determinant are $\chi$-concordant then by Propositions \ref{prop:2covers} and \ref{prop:spin} we see that
$(\Sigma_2(S^3,L),\spincs_L)$ is spin rational homology cobordant to $(\Sigma_2(S^3,L'),\spincs_{L'})$.
Thus taking $L$ to  the pair $(\Sigma_2(S^3,L),\spincs_L)$ gives a group homomorphism
$$\tcalf:\tcaln\to\Theta^3_{\qq,\spin}$$
from the subgroup of $\tlc$ represented by links with nonzero determinant to the spin rational homology cobordism group of spin rational homology three-spheres.

For a marked oriented link $L$ with nonzero determinant we define
$$\delta([L])=4\,d\circ\tcalf([L])=4d(\Sigma_2(S^3,L),\spincs_L),$$
where $d$ is the correction term invariant of \ozsvath\ and \szabo\ \cite{os4}.  
This is a composition of homomorphisms and thus a homomorphism from $\tcaln$ to $\qq$.
For a knot $K$, this is double the concordance invariant studied in \cite{conc}; the basic properties of $\delta$ for links are established in a similar manner.  From \cite{kt} and Proposition \ref{prop:spin}, $(\Sigma_2(S^3,L),\spincs_L)$ is the boundary of the spin four-manifold  given as the double branched cover of $D^4$ along a Seifert surface for $L$; moreover the signature of this manifold is equal to the signature of $L$.
By \cite[Theorem 1.2]{os4}, it follows that $\delta(L)$ is an integer and is congruent to minus the signature of $L$ modulo 8.

\begin{lemma}
\label{lem:alt}
Let $L$ be a nonsplit oriented alternating link.  Then $\sigma(L)+\delta(L)=0$.
\end{lemma}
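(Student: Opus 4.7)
The goal is to show $\delta(L)=-\sigma(L)$, which gives $\sigma(L)+\delta(L)=0$. My plan is to combine the sharpness of the checkerboard-plumbing 4-manifold bounded by $\Sigma_{2}(L)$ with the Gordon--Litherland signature formula, applied to the canonical spin structure $\spincs_{L}$ pinned down by Proposition~\ref{prop:spin}.

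Fix a reduced alternating diagram $D$ of $L$ and let $X$ be the double cover of $D^{4}$ branched over a push-in of the black checkerboard surface $B$. Then $X$ is a negative-definite plumbing 4-manifold with $\partial X=\Sigma_{2}(L)$ and intersection form $-G_{L}$, where $G_{L}$ is the black Goeritz form of $D$. By \ozsvath--\szabo's results on branched double covers of alternating links, $\Sigma_{2}(L)$ is an $L$-space and $X$ is \emph{sharp}, which means
\[
4\,d(\Sigma_{2}(L),\mathfrak{s})=\max_{c}\bigl(c^{2}+b_{2}(X)\bigr),
\]
the maximum taken over characteristic covectors $c$ of $Q_{X}$ whose induced spin$^{c}$ structure on the boundary equals $\mathfrak{s}$. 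I would specialise to $\mathfrak{s}=\spincs_{L}$, using Proposition~\ref{prop:spin} to single out those $c$ that actually restrict to the canonical spin structure: namely, the characteristic covectors on $X$ whose boundary spin$^{c}$ structure agrees, via Turaev's construction, with the one determined by any Seifert surface for $L$.

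The last step is a direct computation via the Gordon--Litherland formula $\sigma(L)=\sigma(G_{L})-\tfrac{1}{2}e(B)$; substituted into the sharpness equation this shows $\max_{c}(c^{2}+b_{2}(X))=-\sigma(L)$, hence $\delta(L)=-\sigma(L)$. The principal obstacle is precisely this last step---pinning down the characteristic covector on $X$ that corresponds to the spin structure $\spincs_{L}$ of Proposition~\ref{prop:spin}, and then keeping Gordon--Litherland's sign bookkeeping compatible with the $\mathrm{Spin}^{c}$ conventions used by \ozsvath\ and \szabo. Once the identification is made, the arithmetic is routine, and the mod-$8$ congruence $\delta(L)\equiv-\sigma(L)$ established earlier in this section provides a strong consistency check.
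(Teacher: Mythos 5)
Your overall strategy is the same as the paper's: both arguments adapt the Manolescu--Owens computation for alternating knots, using the negative-definite Goeritz form of an alternating diagram together with the Gordon--Litherland signature formula and the sharpness of the branched double cover $X$ of the pushed-in black surface to compute correction terms. That part of your outline is sound and matches what the paper describes as following ``virtually without change'' from \cite{conc}.

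However, the step you defer as ``the principal obstacle'' is not a routine bookkeeping issue --- it is the entire content of the lemma beyond the knot case, and your proposal does not resolve it. For a link, $\det L$ may be even, $H^1(\Sigma_2(S^3,L);\zz/2)$ is nontrivial, and the double branched cover carries $2^{m-1}$ spin structures; the Manolescu--Owens argument only yields $\sigma(L)+4d(\Sigma_2(S^3,L),\spincs'_L)=0$ for \emph{some} spin structure $\spincs'_L$ arising from the Goeritz computation. Since $\delta$ is defined using the specific spin structure $\spincs_L$ determined by the orientation of $L$ via Proposition~\ref{prop:spin}, you must prove $\spincs'_L=\spincs_L$, and saying that one should ``single out those $c$ that restrict to the canonical spin structure'' is circular without an explicit identification. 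The paper does this concretely: it exhibits $\spincs'_L$ as the restriction of the spin structure determined by a characteristic sublink $C$ of the Kirby diagram of $X$, where $C$ corresponds to the set $S$ of white regions joined to the region at infinity $r_0$ by a path through an odd number of negative crossings (this is where the orientation of $L$ enters), and then verifies $\spincs'_L=\spincs_L$ by comparing the restrictions of both spin structures to the lifts $\widetilde{\lambda}$ of curves $\lambda$ linking pairs of adjacent white regions, matching against the cohomology class $h_L$ from the proof of Proposition~\ref{prop:spin}. Without this identification your argument proves a strictly weaker statement, and the mod-$8$ congruence you invoke as a ``consistency check'' cannot distinguish between the various spin structures, so it does not close the gap.
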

\begin{proof}
In the special case
that $L$ is an arborescent link associated to a plumbing graph with no bad vertices,
this follows from results of Saveliev \cite{saveliev} and Stipsicz \cite{stipsicz}, each of whom show that
one of $\sigma(L)$, $-\delta(L)$ is equal to the Neumann-Siebenmann $\bar\mu$-invariant of the plumbing tree.

We follow the proof of \cite[Theorem 1.2]{conc} which establishes the result for alternating knots.
One may use the negative-definite Goeritz matrix $G$ of an alternating diagram for $L$ to compute
the signature, by a theorem of Gordon-Litherland \cite{gordonlitherland}, and also to compute the correction terms
of the double branched cover by results of \ozsvath-\szabo\ \cite[Proposition 3.2]{osu1}.

The proof given in \cite{conc} may be adapted virtually without change to the case of an oriented alternating
link and leads to the conclusion
$$\sigma(L)+4d(\Sigma_2(S^3,L),\spincs'_L)=0,$$
for \emph{some} spin structure $\spincs'_L$ on the double branched cover.  We will describe $\spincs'_L$ in terms
of a 4-manifold bounded by $\Sigma_2(S^3,L)$ and confirm that it is $\spincs_L$.

Choose an alternating diagram of $L$ and colour the complementary regions black and white in chessboard fashion, with
white regions to the left of the overpass after crossings as shown in Figure~\ref{fig:colour}.  Let $X$ denote the double cover
of $D^4$ branched along the properly embedded surface obtained by pushing the interior of the black surface into the interior
of $D^4$.  There is a simple procedure (cf.\ \cite{osu1}) for obtaining a Kirby diagram of $X$ from the given diagram of $L$: each crossing is
replaced by a clasp as in Figure~\ref{fig:Xw}, resulting in a link with one component for each white region.  The framing on each link is minus the number of
crossings adjacent to the corresponding white region.
These are the two-handles of $X$; we then add a single three-handle.  Choose one white region at random and label it $r_0$.
The two-handle corresponding to this region may be slid off the other two-handles and cancelled with the three-handle.
The intersection form in the basis given by the remaining two-handles is given by the Goeritz matrix of the diagram,
with $r_0$ as the ``region at infinity''.  We note each two-handle is attached along an unknot and therefore there is a two-sphere in
$X$ obtained by gluing the core of the two handle to a disk in $D^4$ bounded by the attaching circle.

The spin structure $\spincs'_L$ is described by a characteristic sublink of this diagram.
Given any two white regions in the alternating diagram one may connect them by a path consisting of
crossings in the diagram.
The orientation of the link determines a subset $S$ of the white regions
as follows: a region $r$ is in $S$ if there is a path from $r_0$ to $r$ using an odd number of negative crossings (and any number of positive crossings).
This in turn determines a sublink $C$ of the Kirby diagram for $X$ consisting of the components corresponding to regions in $S$.  It is easy to verify (see
\cite{conc}) that this is a characteristic sublink, or in other words if we let $\Sigma$ be the union of two-spheres in $X$ corresponding to the components of
$C$ then there is a spin structure on $X\setminus\Sigma$ which does not extend over $\Sigma$, i.e.\ which restricts to the trivial spin structure on any meridian of $\Sigma$.  The restriction of this spin structure
to the boundary of $X$ is $\spincs'_L$.

To verify that $\spincs'_L=\spincs_L$ we compare their restrictions to lifts of sums of two meridians of $L$.  It suffices to consider curves such as $\lambda$ in Figure \ref{fig:Xw}
which link two adjacent white regions $r,r'$.  This lifts to $\widetilde{\lambda}$ in the Kirby diagram for $X$, shown also in Figure \ref{fig:Xw}.  It follows that $\spincs'_L$ restricts to the trivial spin structure on
$\widetilde{\lambda}$
if and only if $r$ and $r'$ are both in or both not in $S$.  This is in turn equivalent to $h_L$ (as in the proof of Proposition \ref{prop:spin}) being nonzero, and $\spincs_L$ having trivial restriction, on $\widetilde{\lambda}$.
\end{proof}

\begin{figure}[htbp] 
{\centering
\ifpic
\includegraphics[width=5cm]{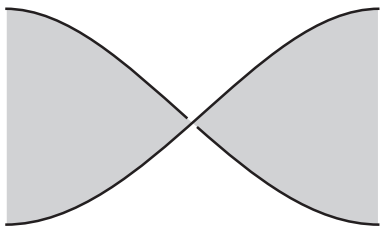}
\else \vskip 5cm \fi
\begin{narrow}{0.3in}{0.3in}
\caption{
\bf{Colouring convention for alternating diagrams.}}
\label{fig:colour}
\end{narrow}
}
\end{figure}

\begin{figure}[htbp]
{\centering
\ifpic
\psfrag{f}{\scriptsize{$-2$}}
\psfrag{a}{$\leadsto$}
\psfrag{u}{$\cup\text{ 3-handle}$}
\psfrag{l}{\scriptsize{$\lambda$}}
\psfrag{t}{\scriptsize{$\widetilde{\lambda}$}}
\includegraphics[width=11.5cm]{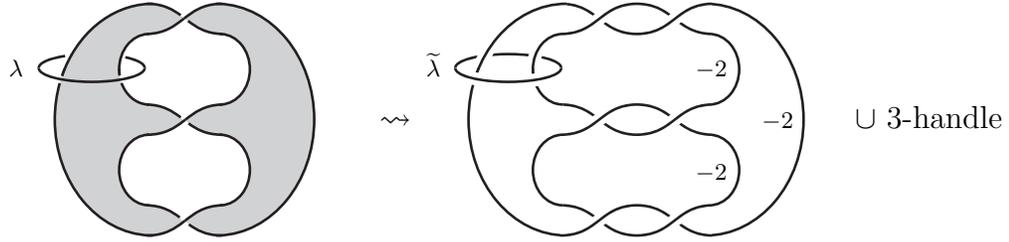}
\else \vskip 5cm \fi
\begin{narrow}{0.3in}{0.3in}
\caption{
\bf{A link diagram and the  double cover of $D^4$ branched along the black surface, showing the preimage of a curve $\lambda$.}}
\label{fig:Xw}
\end{narrow}
}
\end{figure}

We will use the homomorphisms $\tilde{l}$, $\sigma$ and $\delta$ to exhibit a direct summand of $\tcaln_0$ (this is the subgroup of $\tlc$ represented
by links with nonzero determinant and slice marked component).

The marked oriented links $\tilde{H}$ and $\tilde{L}_1$ from Figure \ref{fig:HL1} have
\begin{align*}
(\tilde{l},\sigma,\delta)(\tilde{H})&=(1,-1,1)\\
(\tilde{l},\sigma,\delta)(\tilde{L}_1)&=(1,0,0).
\end{align*}

Let $L_4$ be the Montesinos link (see for example \cite{conc}) given by plumbing twisted bands according to the positive-definite plumbing graph shown in Figure \ref{fig:L2graph}.  Each of its three components is an unknot and its determinant is 4.  Its signatures and the correction terms of its double branched cover may be computed using the plumbing graph (\cite{saveliev}, \cite{os6}); these turn out to be
\begin{align*}
\sigma&=\{-8,0,0,4\}\\
\delta&=\{0,0,0,-4\}.
\end{align*}
It follows that $\tilde{H}$, $\tilde{L}_1$ and  $L_4$ (with some choice of orientation) generate a $\zz^3$ summand of the direct complement $\tcaln_0$ of $\calc$ in $\tcaln$.  

\begin{figure}[htbp]
{\centering
\ifpic
\psfrag{1}{\scriptsize{$1$}}
\psfrag{2}{\scriptsize{$2$}}
\psfrag{4}{\scriptsize{$4$}}
\psfrag{6}{\scriptsize{$6$}}
\includegraphics[height=4cm]{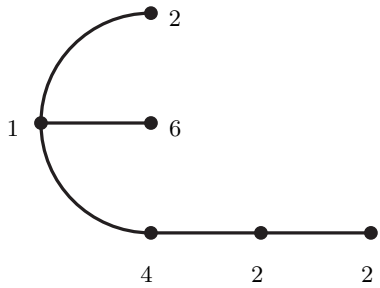}
\else \vskip 5cm \fi
\begin{narrow}{0.3in}{0.3in}
\caption{
\bf{The plumbing diagram for the Montesinos link $L_4$.}}
\label{fig:L2graph}
\end{narrow}
}
\end{figure}

A $\zz^\infty$ subgroup of $\tlc_0$ is given by a marked oriented version of that previously exhibited in $\lc_0$, i.e.\ it is generated by marked oriented
two-bridge links
$$\{S(2k,1)\,|\,k>2\}.$$  For an alternative argument that these links are independent in $\tlc$, using Levine-Tristram signatures, see the proof of Theorem \ref{thm:ortop}.

This completes the proof of Theorem \ref{thm:or}.

\begin{remark}
\label{rem:less2tors}
We note that the two-bridge links $S(q^2+1,q)$ which were shown in Proposition \ref{prop:2torsgroup} to generate $(\zz/2)^\infty<\lc_0$ have nonzero $\tilde{l}$ and hence infinite order in 
$\tlc$.
\end{remark}

\begin{remark}
\label{rem:inv}
The homomorphisms $\sigma$ and $\delta$ do not directly ``see'' the marked component of a marked oriented link but can give information about it nonetheless.
There is an involution $\rho$ on $\lc$ given by reversing orientation.
This has two lifts $\tilde{\rho}$ and $\tilde{\rho}'$ to $\tlc$: the former by reversing the orientation of the marked component
and the latter by reversing the orientation on all components.  A necessary condition for a marked oriented link $L$ to be trivial in $\tlc$ is for $\sigma$ and $\delta$
to vanish on $L$ and also on $\tilde{\rho}(L)$.  This can be used to show which component of the connected sum of Hopf and Whitehead links (see Figure \ref{fig:ribbonF}) may be marked
for that link to be trivial in $\tlc$ (though in this example that is also determined by linking numbers, cf.\ Example \ref{eg:HW}).
\end{remark}

\begin{remark}
\label{rem:deltas}
One might also expect to obtain homomorphisms
$\calf_{p^k}$ from some subgroup of $\tlc$
to the rational homology cobordism group of rational homology three-spheres, and possibly also to
the spin cobordism group $\Theta^3_{\qq,\spin}$, by taking prime power branched covers.
One could then extend Jabuka's homomorphisms \cite{jabuka}
$$\delta_{p^n}:\calc\to\zz$$
to a suitable subgroup of the link group $\tlc$.
\end{remark}

\begin{remark}
As we have seen, forgetting orientations on nonmarked components gives a surjection from marked oriented links to partly oriented links, inducing a surjection from $\tlc$ to $\lc$.  One could also forget which component is marked, giving a surjection from marked oriented links to oriented links.  We note this does \emph{not} induce a homomorphism from $\tlc$ to Hosokawa's link concordance group 
$\calh$ \cite{hosokawa}.  For example the rightmost link in Figure \ref{fig:ribbonF} is trivial in $\tlc$ (with appropriate choice of orientation and marked component), however it is nontrivial in Hosokawa's group with any chosen orientation since the sum of its pairwise linking numbers is $\pm1$ (it is shown in \cite{hosokawa} that this sum gives a surjection onto the direct complement of $\calc$ in Hosokawa's group).
\end{remark}

\section{Using locally flat surfaces}
\label{sec:top}
Replacing the word \emph{smooth} with \emph{locally flat} in Definition \ref{def:chiconc} leads to topological link concordance groups $\lctop$ and $\tlctop$.  We have the following topological versions of Theorem \ref{thm:mainthm} and Theorem \ref{thm:or}:

\begin{theorem}
\label{thm:lctop}
The set of locally flat $\chi$-concordance classes of partly oriented links forms an abelian group
$$\lctop\cong\calc_\Top\oplus(\lctop)_0$$
under connected sum which contains the topological knot concordance group $\calctop$ as a direct summand
(with  $\calctop\hookrightarrow\lctop$ induced by the inclusion of oriented knots into partly oriented links).

The complement $(\lctop)_0$ of $\calc_\Top$ in $\lctop$  contains a $(\zz/2)^\infty$ subgroup.
\end{theorem}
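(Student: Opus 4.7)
The plan is to transpose the proof of Theorem \ref{thm:mainthm} to the locally flat setting. The proofs of Lemmas \ref{lem:cyl}, \ref{lem:equiv}, and \ref{lem:groupsum} rest entirely on topological constructions in $D^4$ and in the cylinder $S^3\times[0,1]$: drilling out an arc of an annulus, attaching a $(3,1)$-handle pair, reflecting a symmetric link through a hyperplane to obtain a surface in a half-space, and gluing cobordisms end-to-end. Each has an immediate locally flat analogue, so these lemmas give the monoid and group structure on $\lctop$, the splitting $\lctop \cong \calctop \oplus (\lctop)_0$, and the identification of $(\lctop)_0$ with the subgroup of classes whose oriented component is topologically slice.

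To produce the $(\zz/2)^\infty$ subgroup of $(\lctop)_0$, I would reuse the family $\{S(q^2+1,q) \mid q\text{ odd}\}$ from Proposition \ref{prop:2torsgroup}. Each such two-bridge link has unknotted components, lies in $(\lctop)_0$, and satisfies $L=-L$, so has order at most two in $\lctop$. To see the order is exactly two, I would invoke the topological half of Proposition \ref{prop:2covers}: a locally flat $\chi$-nullconcordance would produce a topological rational homology four-ball bounded by the double branched cover $L(q^2+1,q)$, forcing $q^2+1=|H_1(L(q^2+1,q);\zz)|$ to be a square by half-lives-half-dies, which fails for any nonzero integer $q$.

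For infinite generation the arithmetic argument of Proposition \ref{prop:2torsgroup} transfers verbatim. Given a finite subset $\{S(q_i^2+1,q_i)\}_{i=1}^n$, I would choose a prime $p\equiv 1\pmod 4$ with $p\nmid q_i^2+1$ for every $i$, and an odd $q<p$ with $p\mid q^2+1$ but $p^2\nmid q^2+1$. A relation expressing $S(q^2+1,q)$ as a sum of a subcollection $\{S(q_i^2+1,q_i)\}_{i\in I}$ in $\lctop$ would, via the topological version of Proposition \ref{prop:2covers}, realise the connected sum $-L(q^2+1,q)\#\bigl(\#_{i\in I}L(q_i^2+1,q_i)\bigr)$ as the boundary of a topological rational homology four-ball. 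The order of its first homology is $(q^2+1)\prod_{i\in I}(q_i^2+1)$, which contains $p$ to the first power, contradicting that $|H_1|$ must be a square. The main thing to check is that the topological branched-cover argument retains enough strength to reproduce this obstruction, which is immediate since the half-lives-half-dies constraint is purely algebraic-topological.
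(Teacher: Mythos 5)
Your proposal is correct and follows essentially the same route as the paper: the structural lemmas (\ref{lem:equiv}, \ref{lem:groupsum}) carry over unchanged to the locally flat category, and the $(\zz/2)^\infty$ subgroup is obtained from the links $S(q^2+1,q)$ exactly as in Proposition \ref{prop:2torsgroup}, using the topological case of Proposition \ref{prop:2covers} (stated in the paper for locally flat surfaces) to force the determinant of a $\chi$-nullconcordant link to be a square. The only difference is expository: you spell out the square-determinant obstruction and the prime-counting argument, which the paper simply cites.
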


\begin{theorem}
\label{thm:ortop}
The set of locally flat $\chi$-concordance classes of marked oriented links forms an abelian group
$$\tlctop\cong\calctop\oplus(\tlctop)_0$$
under connected sum which contains the topological knot concordance group $\calctop$ as a direct summand
(with  $\calctop\hookrightarrow\tlctop$ induced by the inclusion of oriented knots into marked oriented links).
Forgetting orientations on nonmarked components induces a surjection $\tlctop\to\lctop$.

The complement $(\tlctop)_0$ of $\calc_\Top$ in $\tlctop$  contains a $\zz/2$ direct summand and a $\zz^\infty$ subgroup.
\end{theorem}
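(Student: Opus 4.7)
The plan is to parallel the proofs of Theorems \ref{thm:mainthm} and \ref{thm:or} in the locally flat category, replacing smooth invariants by topological ones where needed. The group-theoretic content comes for free: the arguments of Lemmas \ref{lem:cyl}, \ref{lem:equiv} and \ref{lem:groupsum}, and their marked oriented analogues from Section \ref{sec:or}, are four-dimensional cut-and-paste constructions---drilling an arc of the oriented annulus, rotating the symmetric connected sum $-L\#L$ about a hyperplane in four-dimensional half-space, and concatenating product cobordisms in $S^3\times[0,1]$---all of which respect locally flat embeddings. These give the abelian group structure on $\tlctop$, the direct sum decomposition $\tlctop \cong \calctop \oplus (\tlctop)_0$ via $[L]\mapsto([\vec{K}],[-\vec{K}\#L])$, and the surjection $\tlctop\to\lctop$ induced by forgetting orientations on nonmarked components.

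For the $\zz/2$ direct summand of $(\tlctop)_0$, I would first verify that Lemma \ref{lem:F1F2} carries over to the locally flat setting; the saddle-counting argument there can be replaced by a homological argument in the complement of $F$ in $D^4$, which avoids any appeal to smoothness. It follows that locally flat $\chi$-concordance of marked oriented links preserves the number of components modulo two, so the map $\mu\colon [L]\mapsto \#L+1\pmod 2$ descends to a well-defined homomorphism $\tlctop\to\zz/2$. The two-component marked oriented unlink $U$ lies in $(\tlctop)_0$ (its marked component is an unknot), satisfies $\mu(U)=1$, and has order two since $2U$ bounds a disk on the doubled marked component together with one annulus joining the two copies of the unmarked unknot. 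Hence $\zz/2\langle[U]\rangle$ splits off as a direct summand.

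For the $\zz^\infty$ subgroup of $(\tlctop)_0$, the main observation is that Lemma \ref{lem:sig} is a topological statement: the Murasugi-Tristram inequality applies to locally flat $\chi$-slice surfaces (cf.\ \cite{kt,florens2,cimasoniflorens}). Thus for each prime-power root of unity $\omega\in S^1\setminus\{1\}$, the Levine-Tristram signature descends to a homomorphism $\sigma_\omega$ on the topological analogue of $\tcaln_\omega$. I would then take the two-bridge links $\{S(2k,1)\mid k>2\}$, which are the $(2,2k)$-torus links; each represents a class in $(\tlctop)_0$ with nonzero determinant. Using the classical step-function formula for the signature function of a $(2,n)$-torus link, for every $k_0>2$ one can select a prime-power root of unity $\omega_{k_0}$ at which $\sigma_{\omega_{k_0}}(S(2k_0,1))\ne 0$ while $\sigma_{\omega_{k_0}}(S(2j,1))=0$ for all $j<k_0$. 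The resulting upper-triangular system of signature values forces linear independence and produces the desired $\zz^\infty$ subgroup.

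The hard part is this last step: since the smooth invariants $\delta$ and $\tcalf$ from Theorem \ref{thm:invts} that handled independence in the smooth version of Theorem \ref{thm:or} are not directly available in the topological category, the independence of the family $\{S(2k,1)\}$ must be extracted from a careful inspection of the signature jumps and a choice of prime-power roots $\omega_{k_0}$ landing in the right sign intervals---a concrete but delicate computation rather than a conceptual obstruction.
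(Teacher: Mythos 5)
Your proposal follows essentially the same route as the paper: the group structure is inherited from the earlier lemmas, the $\zz/2$ summand is generated by the two-component unlink and split off by the component-count homomorphism $\mu$, and the $\zz^\infty$ subgroup is the family $S(2k,1)$ detected by Levine--Tristram signatures via their jump behaviour (the paper cites Przytycki's computation and places $\omega$ in an interval where only the largest link has jumped). Two corrections are needed. First, the mod-2 invariance of the number of components does not require any analogue of Lemma \ref{lem:F1F2}: a $\chi$-concordance is a disk together with annuli, so $-L_0\#L_1$ has an odd number of boundary circles and $|L_0|\equiv|L_1|\pmod 2$ follows in either category; Lemma \ref{lem:F1F2} is about linking numbers, and the paper deliberately leaves its locally flat version as an unproved remark (which is exactly why the topological statement claims only $\zz/2$ rather than $\zz\oplus\zz/2$). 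Second, your triangularity claim that $\sigma_{\omega_{k_0}}(S(2j,1))=0$ for all $j<k_0$ is false: with the parallel orientation the Levine--Tristram signature of $S(2j,1)$ equals $-1$ (not $0$) for $\omega$ between $1$ and its first jump, since the form is negative definite on the kernel of the skew part. The fix is routine --- either first deduce $\sum a_i=0$ from $\omega$ near $1$, or argue with the difference $\sigma_\omega-\sigma_{-1}$ across the single jump of the largest link as the paper does near $\omega=-1$ --- and one must also remember to choose $\omega$ to be a prime-power root of unity avoiding the Alexander polynomials, which density makes possible. With those adjustments your argument goes through and coincides with the paper's.
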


\begin{proof}[Proof of Theorem \ref{thm:lctop}]  Most of the proof is the same as that of Theorem \ref{thm:mainthm}, in particular Lemmas \ref{lem:equiv} and \ref{lem:groupsum} apply without modification.  Proposition \ref{prop:2covers} gives us a topological version of the branched double cover homomorphism
$\calf$.  Proposition \ref{prop:2torsgroup} shows that the two-bridge links $\{S(q^2+1,q)\}$ generate a $(\zz/2)^\infty$ subgroup in $(\lctop)_0$.
\end{proof}

One could presumably  reprove Lemma \ref{lem:F1F2}  using a mod 2 count of intersections of locally flat surfaces and hence recover a $\zz/2$ summand of $(\lctop)_0$ as in the smooth case.
One can also show using linking forms and results from \cite{short} that the two-bridge links $\{S(2k,1)\,|\,k\equiv 3\pmod4\}$ generate an infinitely generated subgroup of $(\lctop)_0$
consisting of elements of order at least 4.

\begin{proof}[Proof of  Theorem \ref{thm:ortop}] This largely follows the proof of Theorem \ref{thm:or} but this time we make use of Levine-Tristram signatures to establish that
the two-bridge links 
$$\{S(2k,1)\,|\,k>0\},$$
oriented so that the linking number is $+k$,  are linearly independent in $\tlctop$.

The Levine-Tristram signatures of these links are computed by Przytycki in \cite[Example 6.17]{prz}.  In particular $\sigma_\omega(S(2k,1))$ is a locally constant function of $\omega$ and changes at each $2k$th root of $-1$.  Suppose $\sum_{i=1}^n a_i\sigma(S(2k_i,1))=0$ for some integers $a_i$, with $0<k_1<\cdots<k_n$ and $a_n\ne0$.  For  $\omega=\exp(it)$ with $t\in[\pi-\pi/2k_{n-1},\pi-\pi/2k_n]$ we find
$$\sum_{i=1}^n a_i\sigma_\omega(S(2k_i,1))=a_n(\sigma_\omega(S(2k_n,1))-\sigma(S(2k_n,1)))\ne0.$$
Linear independence in $\tlctop$ then follows from Lemma \ref{lem:sig}, which also holds in the locally flat case.
\end{proof}

\begin{proof}[Proof of Theorem \ref{thm:TOPeg}]
Each of the links shown in Figure \ref{fig:DiffTop} is a connected sum of a partly oriented link $L_i$ and the Hopf link $H$, and each $L_i$ is a 2-component link with the same linking number as the  Hopf link.

Suppose that the partly oriented link $L_i\# H$ is (smoothly) $\chi$-nullconcordant.  Thus it bounds a smoothly embedded surface $F$ in $D^4$  which is either one disk and two \mobius\ bands, or a disk and an annulus, in each case with the marked component bounding the disk.  The first  possibility is ruled out by linking numbers as in Lemma \ref{lem:F1F2}, and the second  is equivalent to existence of a concordance in the traditional sense,
given by two properly embedded annuli in $S^3\times I$, between $L_i$ and $H$.  This is ruled out in the case of $L_3$ since $\delta(C)\ne0$ implies $C$ is not slice, and is ruled out in the case of $L_2$ by recent work of Cha-Kim-Ruberman-Strle \cite{ckrs}.

Each of $L_2$ and $L_3$ has Alexander polynomial one (\cite {ckrs}) and hence is locally flatly concordant (in the traditional sense and hence also $\chi$-concordant) to the Hopf link by a theorem of Davis \cite{davis}, from which it follows that $L_i\# H$ is trivial in $\lctop$ and (with a choice of orientation) in $\tlctop$.
\end{proof}


\section{Double branched covers of the four-ball}
\label{sec:2covers}

The following is a slight generalisation of Proposition \ref{prop:2covers}.

\begin{proposition}
\label{prop:2covers2}
Let $F$ be a locally flat properly embedded surface in $D^4$ with no closed components and Euler characteristic $n$. Suppose that the boundary of $F$ is a link $L$ with non-zero determinant. Then the double cover of $D^4$ branched along $F$ has $b_1=b_3=0$ and $b_2=1-n$.
\end{proposition}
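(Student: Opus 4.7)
The plan is to compute the Betti numbers of $N$ by combining an Euler characteristic calculation with Poincar\'e--Lefschetz duality and a Smith-theoretic analysis of the deck involution.

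First I would invoke the standard formula for the Euler characteristic of a double branched cover to obtain
\[
\chi(N) = 2\chi(D^4) - \chi(F) = 2 - n.
\]
Since $N$ is a compact, connected, orientable four-manifold with non-empty boundary, $b_0(N) = 1$ and $b_4(N) = 0$.

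Second, the hypothesis $\det L\neq 0$ combined with the classical identity $|H_1(\Sigma_2(S^3,L);\zz)| = |\det L|$ shows that $\partial N = \Sigma_2(S^3,L)$ is a rational homology three-sphere. Using the long exact sequence of $(N,\partial N)$ in rational coefficients together with Poincar\'e--Lefschetz duality $H_k(N;\qq) \cong H_{4-k}(N,\partial N;\qq)$, I would deduce $b_1(N) = b_3(N)$; the Euler characteristic then gives $b_2(N) = 1 - n + 2b_1(N)$, reducing everything to showing $b_1(N) = 0$.

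For the main step, the deck involution $\tau$ on $N$ splits $H_*(N;\qq)$ into $(\pm 1)$-eigenspaces. Transfer (Smith theory) identifies the $(+1)$-eigenspace with $H_*(D^4;\qq)$, which vanishes in positive degrees, so $H_1(N;\qq)$ lies entirely in the $(-1)$-eigenspace. Decomposing $N = \nbd(\tilde F)\cup V$ and viewing $V$ as an honest double cover of $D^4\setminus F$ classified by the mod-$2$ meridian augmentation, one checks that the meridians of $\tilde F$ span the $(+1)$-eigenspace of $H_1(V;\qq)$ and are killed in forming $N$, leaving $H_1(N;\qq)$ isomorphic to the $(-1)$-eigenspace $H_1(V;\qq)^- \cong H_1(D^4\setminus F;\qq_\sigma)$, the twisted rational homology for the sign representation $\sigma\colon\pi_1(D^4\setminus F)\to\zz/2$ sending each meridian to $1$.

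The hard part will be proving the vanishing of this twisted homology, which is where $\det L\neq 0$ is genuinely needed: the two-component unlink bounding a spanning annulus has $\det L = 0$ and double branched cover $S^1\times D^3$ with $b_1 = 1$, so the hypothesis is indispensable. My approach would be to apply Lefschetz duality to a compact version of $V$, relating $H_1(V;\qq)^-$ to the $(-1)$-eigenspace of the homology of $\partial V$; this boundary decomposes into the branched cover $\partial N$ of $S^3$ along $L$ and a circle bundle over $\tilde F$, with the former contributing trivially because $\partial N$ is a $\qq$-homology sphere and the latter absorbed into the meridian classes already filled in. Once $b_1(N) = 0$ is established, the Euler characteristic computation immediately yields $b_2(N) = 1-n$.
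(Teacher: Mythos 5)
Your overall skeleton is sound, and up to the reduction it is a legitimately different framing from the paper's: the Euler characteristic count, the observation that $\det L\neq 0$ makes $\partial N$ a rational homology sphere so that $b_1=b_3$, and the transfer/eigenspace argument showing $H_1(N;\qq)=H_1(N;\qq)^-$ is (a quotient of) the twisted group $H_1(D^4\setminus F;\qq_\sigma)$ are all correct. (One small inaccuracy: Mayer--Vietoris only exhibits $H_1(N;\qq)$ as a \emph{quotient} of $H_1(V;\qq)^-$ by the image of $H_1(\partial\nu\widetilde{F};\qq)^-$, not as isomorphic to it; this is harmless for the vanishing you want.) The genuine gap is the step you yourself flag as ``the hard part'': you never prove $H_1(D^4\setminus F;\qq_\sigma)=0$, and this is not a detail --- it is essentially equivalent to the proposition (for a slice disk it is exactly the statement that $\Sigma_2(D^4,\Delta)$ is a rational homology ball). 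The sketch you offer does not close up: $\partial V$ does not decompose as $\partial N$ together with a circle bundle over $\widetilde{F}$; rather it is the \emph{unbranched} double cover of the link exterior glued along tori to the double cover of $\partial\nu F$, and $\partial N$ is not a subspace of $\partial V$ at all. Moreover, the $(-1)$-eigenspace of $H_1$ of the circle-bundle piece is not ``absorbed into the meridian classes'': by the twisted Gysin sequence of the circle bundle it receives contributions from $H_1(F)$ when $F$ has positive genus, and these have to be controlled, not waved away. As stated, the Lefschetz-duality step is a restatement of what must be proved rather than a proof.

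For contrast, the paper circumvents this by never isolating the twisted (absolute) group. It works throughout with $\zz/2$ coefficients and with the \emph{relative} pair: the relative Gysin and Mayer--Vietoris sequences applied to $(D^4,S^3)=(D^4\setminus F\,\cup\,\nu F,\;S^3\setminus L\,\cup\,L\times D^2)$ give $H^1(D^4\setminus F,S^3\setminus L;\zz/2)=0$ (whereas the absolute $H^1$ is $(\zz/2)^m$, which is why your absolute/eigenspace route is left holding a nontrivial group). The relative Gysin sequence for the double cover then sandwiches $H^1(\widetilde{D^4\setminus F},\widetilde{S^3\setminus L};\zz/2)$ between two zero groups, and a final Mayer--Vietoris gives $H^1(N,\partial N;\zz/2)=0$, hence $b_3(N)=0$. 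Only at the last step does $\det L\neq 0$ enter, via the long exact sequence of $(N,\partial N)$ over $\qq$ (note that $H^1(N,\partial N;\zz/2)=0$ holds even for the annulus bounding the unlink, where $N=S^1\times D^3$). If you want to salvage your approach you would need to actually carry out a twisted Poincar\'e--Lefschetz duality computation for $H_*(D^4\setminus\nu F,\,\partial;\qq_\sigma)$, including the Gysin sequence of $\partial\nu F\to F$ with twisted coefficients and the identification of $H_1(S^3\setminus L;\qq_\sigma)=0$ with the nonvanishing of $\det L$; that is doable but is a substantially longer argument than the sketch suggests.
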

Note that the surface here does not have to be connected or oriented. In the case where $F$ is a ribbon surface with $n=1$ this is proved in \cite{lisca1,lisca2}.  For smoothly embedded $F$ one could appeal to \cite{lw}.  It follows from Proposition \ref{prop:2covers2} that the \emph{slice Euler characteristic} $\chi_s(L)$ of a link with nonzero determinant is bounded above by 1 (presumably this is well-known).  Here $\chi_s(L)$ is the maximal Euler characteristic of a (smooth) surface $F$ as in Proposition \ref{prop:2covers2}.

\begin{proof}
The general strategy of the proof follows that of \cite[Theorem 3.6]{kt}.
Let $N = \Sigma_2(D^4 , F)$ be the double cover of $D^4$ branched along $F$.
We will construct $N$ by taking a double cover of $D^4 \setminus \nu F$, using a Gysin sequence to compute the homology, before regluing a copy of $\nu F$.
We use $\zz/2$ coefficients throughout.

The pair $(D^4 , S^3)$ can be decomposed as $(D^4 \setminus F \,\cup\, \nu F, S^3 \setminus L \,\cup\, L \times D^2)$.
Applying the relative Gysin and Mayer-Vietoris sequences gives an isomorphism
\begin{equation} \label{reliso}H^1(\partial\nu F, L \times S^1) \cong H^1(F , L),\end{equation}
and also
$$H^1(D^4 \setminus F, S^3 \setminus L) =0.$$
In addition the isomorphism in \eqref{reliso} is induced by the inclusion of $\partial\nu F$ into $\nu F$.

The relative Gysin sequence \cite[Theorem 11.7.36]{atfhv} can also be applied to the pair $(D^4 \setminus F, S^3 \setminus L)$ with  the real line bundle associated to the double cover.
The relevant part of the Gysin sequence is
$$H^1(D^4 \setminus F, S^3 \setminus L) \rightarrow H^{1}(\widetilde{D^4 \setminus F} , \widetilde{S^3 \setminus L}) \rightarrow H^1(D^4 \setminus F, S^3 \setminus L),$$
whence $H^{1}(\widetilde{D^4 \setminus F} , \widetilde{S^3 \setminus L})=0$.

This can be used to calculate the Betti numbers of $N$, which is constructed from the double cover of $D^4 \setminus F$ by attaching $D^2 \times F$.
Applying the Mayer-Vietoris sequence again gives
$$0 \rightarrow H^1(N,\partial N) \rightarrow H^1 (\widetilde{D^4 \setminus F} , \widetilde{S^3 \setminus L}) \oplus H^1(F,L) \rightarrow H^1(\partial\nu F, L \times S^1) \rightarrow \ldots .$$
Combining this with \eqref{reliso} we see that $H^1(N,\partial N)=0$.
Since $N$ is compact and orientable with rational homology sphere boundary, we have
$$b_1(N)=b_3(N)=0.$$
The Euler characteristic of $N$ is given by $\chi(N) = 2\chi(D^4) - \chi(F) = 2-n$,
from which we see that $b_2(N)=1-n$.
\end{proof}

\begin{proof}[Proof of Corollary \ref{cor:2bridge}]  Assume $p$ is even, since the odd case was established in \cite{lisca1}.  In order to have the correct Euler characteristic and number of boundary components, $F$ must be the union of a disk and a M\"obius band. By Proposition \ref{prop:2covers2}, the double cover of $D^4$ branched over $F$ is a rational homology ball and is bounded by the lens space $L(p,q)$.
By a result of Lisca \cite[Theorem 1.2]{lisca1}, there is a ribbon embedding of $F$ in  $D^4$.
\end{proof}

In the wake of Lisca's work, the slice-ribbon conjecture was established by Greene and Jabuka for three-strand pretzel knots $P(a,b,c)$ with $a, b, c$ odd \cite{gj}, and by Lecuona for a different family of 3-tangle Montesinos knots \cite{lecuona}.  It seems likely that their methods may  be combined with Proposition \ref{prop:2covers} to prove a statement analagous to Corollary \ref{cor:2bridge} for some 3-tangle Montesinos links.


\section{Quotients of monoids with involution}
\label{sec:alg}
The reader may have noticed that we have made use of various topological obstructions to a link being $\chi$-slice, that is being the boundary of a properly embedded surface $F$ of Euler characteristic one in $D^4$.  Most of these obstructions do not take account of the marked component.  One may ask, why not simply take the quotient of links by $\chi$-slice links?

The first point to note is that in order for connected sum using marked components to be well-defined on the quotient (see Lemma \ref{lem:groupsum}), we need to specify that the marked component 
of a $\chi$-slice link is the boundary of a disk component of $F$.  The second point regards transitivity of the $\chi$-concordance relation.  This may be understood in terms of the following simple lemma about monoids with involution. 

\begin{lemma}
\label{lem:alg}
Let $(A,\#,-)$ be a commutative  monoid with involution and let $B\subset A$ be a submonoid closed under $-$.  Let $A/B$ denote the quotient of $A$ by
$$a_1\sim a_2\iff a_1\# b_1=a_2\# b_2,\qquad \text{for some } b_i\in B.$$
Then $A/B$ is an abelian group with $[a]^{-1}=[-a]$, and the equivalence relation can be rewritten as
$$a_1\sim a_2\iff -a_1\#a_2\in B,$$
if the following conditions hold:
\begin{enumerate}
\item $-a\#a\in B$, for all $a\in A$, and\label{B1}
\item $a\#b,b\in B\implies a\in B$.\label{B2}
\end{enumerate}
Conversely, given any  morphism of monoids with involution from $A$ to a group, the kernel is a submonoid with involution satisfying \eqref{B1} and \eqref{B2}.
\end{lemma}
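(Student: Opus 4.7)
The plan is to prove the lemma in four stages: that $\sim$ is an equivalence relation, that $A/B$ inherits a group structure, that the two formulations of $\sim$ agree under (1) and (2), and finally the converse. First I would verify that $\sim$ is an equivalence relation using only the submonoid hypothesis on $B$. Reflexivity is immediate from $a = a\#e$ with $e\in B$, symmetry is built into the defining equality, and transitivity follows by chaining: if $a_1\#b_1 = a_2\#b_2$ and $a_2\#b_3 = a_3\#b_4$, then $a_1\#(b_1\#b_3) = a_3\#(b_2\#b_4)$, with both right-hand factors in $B$. The same manipulation shows that $\#$ descends to a well-defined operation on $A/B$, and commutativity, associativity, and the identity $[e]$ transfer directly from $A$. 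Using hypothesis (1), the relation $a\#(-a)\in B$ gives $[a]\#[-a] = [e]$, so $A/B$ is an abelian group with $[a]^{-1} = [-a]$.

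For the equivalence of the two formulations of $\sim$, one direction is essentially tautological given (1): if $b = -a_1\#a_2 \in B$, then from $a_1\#b = (a_1\# -a_1)\#a_2$ and the fact that $a_1\# -a_1\in B$ by (1), one reads off witnesses certifying $a_1\sim a_2$. The reverse implication is where (2) enters, and I expect this to be the only genuinely content-bearing step. Starting from $a_1\#b_1 = a_2\#b_2$ with $b_i\in B$, I would left-multiply by $-a_1$ and use commutativity to get $(-a_1\#a_1)\#b_1 = (-a_1\#a_2)\#b_2$. The left-hand side lies in $B$ by (1) and closure of $B$ under $\#$, so $(-a_1\#a_2)\#b_2\in B$; since $b_2\in B$, condition (2) forces $-a_1\#a_2\in B$, as required.

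For the converse, given a morphism $\phi\colon A\to G$ of monoids with involution into a group, I would let $B = \ker\phi$ and verify the four required properties in turn: $B$ is a submonoid because $\phi$ is a monoid homomorphism; it is closed under the involution because in a group $\phi(-a) = \phi(a)^{-1}$; (1) holds because $\phi(-a\#a) = \phi(a)^{-1}\phi(a) = e_G$; and (2) holds because $\phi(a)\phi(b) = e_G$ together with $\phi(b) = e_G$ forces $\phi(a) = e_G$. None of these steps presents a real obstacle; the only conceptual point worth emphasising is that (2) is precisely the hypothesis needed to convert the ``equality up to a $B$-factor on each side'' form of $\sim$ into the one-sided condition $-a_1\#a_2 \in B$, and the converse direction shows that (2) is automatic whenever $B$ arises as such a kernel.
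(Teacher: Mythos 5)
Your proposal is correct, and since the paper dismisses this lemma with ``Straightforward exercise,'' your write-up simply supplies the intended routine verification: transitivity and well-definedness of $\#$ on $A/B$ need only that $B$ is a submonoid, condition (1) is exactly what makes $[-a]$ an inverse and gives the easy direction of the reformulation, and condition (2) is exactly what gives the hard direction, with the converse checked directly on kernels. No gaps; this matches the argument the authors had in mind.
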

\begin{proof}
Straightforward exercise.
\end{proof}

For example, one may take $A$ to be oriented knots, with $B$ given by slice knots.  One may also take $A$ to be partly oriented or marked oriented links, with $B$ given by $\chi$-nullconcordant links as in Definition \ref{def:chiconc}.  As the following lemma shows, the smallest submonoid satisfying the conditions of Lemma \ref{lem:alg} which contains $\chi$-slice links is ``too large", in that the resulting quotient would be an uninteresting extension of the knot concordance group.
\begin{lemma}
Let $L$ be a partly oriented link with marked oriented component $K$, and let $H$ be the Hopf link.  Then for some $l\in\{0,1\}$ and some unlink $U$, $L\#-\!K\#lH\#U$ is $\chi$-slice.  The same conclusion holds with $l\in\zz$ if $L$ is a marked oriented link.
\end{lemma}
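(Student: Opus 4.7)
My plan is to show that the choice $l=0$ already suffices in both the partly oriented and marked oriented cases, with $U$ chosen appropriately. The key observation is that $\chi$-sliceness as in Definition \ref{def:chislice1} is strictly weaker than the $\chi$-nullconcordance of Definition \ref{def:chiconc}: one only needs $\chi(F)=1$ and no closed components in the bounding surface $F$, with no restriction on the topology of its individual connected components. This flexibility is what makes the lemma easier than it first appears.

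The construction would proceed as follows. Form $L\#-K$: its marked component $M=K\#-K$ is a slice knot, while the remaining components are the $n$ non-marked components $K_1,\dots,K_n$ of $L$. Arbitrarily orient the unoriented components of $L$ in the partly oriented case (in the marked oriented case $L$ is already fully oriented). Apply Seifert's algorithm to a diagram of the resulting oriented link $L\#-K$, then tube together any disconnected pieces of the output via orientation-preserving bands in $D^4$, producing a single connected oriented properly embedded surface $\Sigma\subset D^4$ with $\partial\Sigma=L\#-K$. Since $\Sigma$ is connected with $n+1$ boundary components, $\chi(\Sigma)=1-2g-n$ for some $g\geq 0$, and in particular $\chi(\Sigma)\leq 1$.

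Set $m=1-\chi(\Sigma)\geq 0$ and take $U$ to be an $(m+1)$-component unlink with one marked component; then $L\#-K\#U$ is simply $L\#-K$ together with $m$ split unknot components. These split unknots bound $m$ pairwise disjoint disks $D_1,\dots,D_m$ in $D^4$ which, because the unknots have vanishing linking number with $\partial\Sigma$, may be chosen disjoint from $\Sigma$. The surface
$$F=\Sigma\sqcup D_1\sqcup\cdots\sqcup D_m$$
then has $\chi(F)=\chi(\Sigma)+m=1$, no closed components, and bounds $L\#-K\#U$; in the marked oriented case it is also oriented so that $\partial F$ matches the orientation of $L\#-K\#U$. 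Hence $L\#-K\#0\cdot H\#U$ is $\chi$-slice in the sense of Definition \ref{def:chislice1}, establishing the lemma in both cases.

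The step requiring the most care is the construction of $\Sigma$ as a single \emph{connected oriented} surface: if $L\#-K$ happens to be a split link, the Seifert algorithm output is already disconnected, and its pieces must be joined by bands attached in an orientation-compatible manner. Since $D^4$ provides ample room, one can always realize such tubes with framings chosen so that orientability is preserved, so this obstacle is in fact mild.
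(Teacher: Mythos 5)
Your construction is internally consistent, but it proves the wrong statement. Read against Definition \ref{def:chislice1} alone, ``every link becomes $\chi$-slice after summing with a large enough unlink'' is indeed true by exactly your Seifert-surface-plus-padding-disks argument --- and notice that this makes both the $-\!K$ and the $lH$ summands in the lemma superfluous, which should be a warning sign. The lemma sits in Section \ref{sec:alg} immediately after the sentence explaining that, for connected sum along marked components to descend to the quotient, one must require the marked component of a $\chi$-slice link to bound a \emph{disk component} of $F$; that strengthened notion is what the lemma is about, and it is what the paper's proof delivers: a slice disk $\Delta$ for the marked component $K\#-\!K$, disjoint from a surface bounded by the remaining components. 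Your surface $F=\Sigma\sqcup D_1\sqcup\cdots\sqcup D_m$ places the marked component on the boundary of the single connected piece $\Sigma$ together with all the other components of $L$, so it does not satisfy the disk condition.

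Under the intended reading your claim that $l=0$ always suffices is false, and the obstruction is exactly what the Hopf link summands are there to cancel. By Lemma \ref{lem:F1F2}, if the marked component bounds a disk component $F_1$ disjoint from the rest $F_2$ of the surface, then its total mod $2$ linking number with the other components vanishes (and by Lemma \ref{lem:Zlinking}, in the marked oriented case the integer total linking number vanishes). Take $L=H$ with marked component $K$ an unknot: then $L\#-\!K\#U=H\#U$ has marked component with total linking number $1$ with the rest for every unlink $U$, so no such surface exists and one genuinely needs $l=1$. The content of the paper's proof --- choosing $l=-\lk(L,K)$ (or its mod $2$ reduction) so that the marked component $K'$ of $L'=L\#-\!K\#lH$ has zero (respectively even) total linking number with $L'\setminus K'$, and then removing the intersection points of a surface for $L'\setminus K'$ with a slice disk for $K'$ in cancelling pairs by tubing --- is precisely the step your argument omits.
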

\begin{proof}
The marked component (call it $K'$) of $L'=L\#-\!K\#lH$ is slice and has linking number zero (respectively even) with $L'\setminus K'$, if $l=-\lk(L,K)$ (resp., modulo 2).
Let $\Delta$ be a slice disk, and let $F$ be a smoothly properly embedded orientable surface bounded by $L'\setminus K'$, intersecting $\Delta$ transversely with algebraic intersection number zero (resp., even).  Adding handles to $F$ to remove intersection points in pairs results in an orientable (resp., a possibly nonorientable) embedded surface $F'$ bounded by $L'$ with Euler characteristic $m$, which we may assume to be negative after adding some extra handles.  Connect summing $L'$ with an unlink, and boundary summing $F'$ with a union of disks, gives the result.
\end{proof}


\section{Open questions}
\label{sec:questions}

Here are a few questions that seem interesting to the authors.
\begin{itemize}
\item What are the orders of the Whitehead link and the Borromean rings, in $\lc$ or $\tlc$?  (Their branched double covers have orders 2 and 1 in $\Theta^3_\qq$.)
\item Is there any interesting torsion (not resulting from negative amphichiral links) in either $\lc$ or $\tlc$?
\item Do the Rasmussen $s$ \cite{rasmussen}, see also \cite{bw,lewark}, and \ozsvath-\szabo\ $\tau$ \cite{ostau} invariants give homomorphisms from $\tcaln$ to $\zz$?
\end{itemize}

\end{document}